\newtheorem{theorem}{Theorem}[section]
\newtheorem{proposition}[theorem]{Proposition} 
\newtheorem{corollary}[theorem]{Corollary}
\newtheorem{lemma}[theorem]{Lemma}
\newtheorem{remark}[theorem]{Remark}
\newtheorem{remarks}[theorem]{Remarks}
\numberwithin{equation}{section}
\begin{document}
\title[Free Mutual Information and Orbital Free Entropy]{Remarks on Free Mutual Information \\ and Orbital Free Entropy}
\author[M.~Izumi]
{Masaki Izumi$\,^{1}$}
\address{(M.I.) Department of Mathematics, Graduate School of Science, Kyoto
University, Sakyo-ku, Kyoto 606-8502, Japan}
\email{izumi@math.kyoto-u.ac.jp}
\thanks{$^{1}$Supported in part by Grant-in-Aid for Scientific Research (B) 22340032.}
\author[Y.~Ueda]
{Yoshimichi Ueda$\,^{2}$}
\address{(Y.U.) Graduate School of Mathematics, 
Kyushu University, 
Fukuoka, 819-0395, Japan
}
\email{ueda@math.kyushu-u.ac.jp}
\thanks{$^{2}$Supported in part by Grant-in-Aid for Scientific Research (C) 24540214.}
\thanks{AMS subject classification: Primary:\,46L54; secondary:\,94A17.}
\thanks{Keywords:\,Free mutual information; liberation process; orbital free entropy; free SDE; Loewner equation.}

\maketitle

\begin{abstract} The present notes provide a proof of $i^*(\mathbb{C}P+\mathbb{C}(I-P)\,;\mathbb{C}Q+\mathbb{C}(I-Q)) = -\chi_\mathrm{orb}(P,Q)$ for {\it any} pair of projections $P,Q$ with $\tau(P)=\tau(Q)=1/2$. The proof includes new extra observations, such as a subordination result in terms of Loewner equations. A study of the general case is also given.     
\end{abstract}

\allowdisplaybreaks{

\section{Introduction} 
There are two quantities which play a r\^ole of mutual information in free probability; one is the so-called free mutual information $i^*$ introduced by Voiculescu \cite{Voiculescu:AdvMath99} in the late 90s and the other is the orbital free entropy $\chi_\mathrm{orb}$ due to Hiai, Miyamoto and the second-named author \cite{HiaiMiyamotoUeda:IJM09},\cite{Ueda:Preprint12} (and its new approaches $\tilde{\chi}_\mathrm{orb}$, etc.~due to Biane and Dabrowski \cite{BianeDabrowski:AdvMath13}). These quantities have many properties in common, but no general relationship between them has been established so far. Any question about $i^*$ and/or $\chi_\mathrm{orb}$ {\it for two projections} is known to be a `commutative one' in essence, that is, can essentially be handled within classical analysis (see \cite[\S12]{Voiculescu:AdvMath99} and \cite{HiaiPetz:ACTA06}), and a heuristic argument in \cite{HiaiUeda:AIHP09} supports that the identity $i^* = -\chi_\mathrm{orb}$ holds at least for two projections. Hence the question of $i^*=-\chi_\mathrm{orb}$ for two projections seems most tractable in the direction, and can be regarded as a counterpart of the single variable unification between two approaches $\chi$ and $\chi^*$ of free entropy, which was already established by Voiculescu (see \cite{Voiculescu:Survey}). Recently Collins and Kemp \cite{CollinsKemp:Preprint12} gave a proof of $i^* = -\chi_\mathrm{orb}$ for two projections with $\tau(P) = \tau(Q) = 1/2$ under a rather restricted assumption, along the lines of the above-mentioned heuristic argument. Here we give an improved assertion of their result (i.e., completion of the analysis when $\tau(P)=\tau(Q)=1/2$) with a rather short and completely independent proof. Originally the first-named author observed important ideas after the appearance of \cite{HiaiUeda:AIHP09} as a preprint, and then we prepared an essential part of the present short notes some years ago (see e.g.~the introduction of \cite{Ueda:Preprint12}). Although the main theorem of the present notes is still an assertion about only the case of $\tau(P) = \tau(Q) = 1/2$, a large part of its proof deals with general two projections and involves new extra observations which also enable us to give a partial result in the case of general trace values $\tau(P),\tau(Q)$. Hence the present notes may have some degree of positive significance for future studies in the direction. We should also emphasize that the attempts are important as positive evidence for the conjecture that $i^* = -\chi_\mathrm{orb}$ should hold for general random multivariables, though they have no direct connection with the unification conjecture for free entropy.   

Throughout the present notes, let $(\mathcal{M},\tau)$ denote a sufficiently large, tracial $W^*$-probability space so that all the non-commutative random variables that we will deal with live in $(\mathcal{M},\tau)$. The operator norm is denoted by $\Vert-\Vert_\infty$. Let $S_t$, $t \in [0,\infty)$, be a free additive Brownian motion in $(\mathcal{M},\tau)$ (with $S_0 = 0$). A free unitary multiplicative Brownian motion $U_t$, $t \in [0,\infty)$, with $U_0 = I$ introduced by Biane \cite{Biane:FieldsInstituteComm97} is a non-commutative process consisting of unitary random variables determined by the free stochastic differential equation (free SDE for short) $dU_t = \sqrt{-1}\,dS_t\,U_t - (1/2)U_t\,dt$, $U_0=I$. For given two projections $P, Q$ in $\mathcal{M}$ that are freely independent of $\{U_t\}_{t\geq0}$ the main objective here is to investigate the so-called liberation process $t \in [0,\infty) \mapsto (U_t (\mathbb{C}P+\mathbb{C}(I-P))U_t^*,\mathbb{C}Q+\mathbb{C}(I-Q))$ introduced by Voiculescu \cite{Voiculescu:AdvMath99} in relation with $i^*$ and $\chi_{\mathrm{orb}}$. It is known that the liberation process can be understood by looking at the process of self-adjoint random variables $X_t := Q U_t P U_t^* Q$. Thus we mainly investigate the process $X_t$ in what follows. One can easily derive the free SDE $dX_t = \Xi_t\,\sharp\,dS_t + Y_t\,dt$, where $\Xi_t := \sqrt{-1}(Q\otimes U_t P U_t^* Q - Q U_t P U_t^*\otimes Q)$ and $Y_t := \tau(P)Q - X_t$. See \cite{BianeSpeicher:PTRF98} for the definitions and the notations concerning free SDE's such as $\sharp$-operation. Note that $U_t$ is operator-norm continuous in $t$ by \cite[Lemma 8]{Biane:FieldsInstituteComm97}, and so are $X_t$, $\Xi_t$ and $Y_t$ too.  

\section{Free SDE of $(zI-X_t)^{-1}$ and Cauchy transform of $X_t$}

Several ways to investigate the free SDE of the resolvent process $R(t,z) := (zI-X_t)^{-1}$ and the Cauchy transform of $X_t$ have already been available, see e.g.~\cite[\S6--7]{Demni:JTheorProbab08},\cite[\S\S3.2]{Kargin:JTheorProbab11},\cite[\S\S3.1]{Dabrowski:ArXiv10} and \cite{CollinsKemp:Preprint12}. However, we do give, for the reader's convenience, a simple proof of their explicit formulas by simple algebraic manipulations based on three naturally expected facts -- (i) the free It\^{o} formula, (ii) the resolvent process becomes again a `free It\^{o} process' and (iii) every `free It\^{o} process' has a unique `Doob--Meyer decomposition'. In fact, the essential part of our proof will be done in several lines. The above (i) and (ii) were perfectly provided by Biane and Speicher \cite{BianeSpeicher:PTRF98}, while the above (iii) is the latter half part of Proposition \ref{P-2.2} below. The proposition (with its lemma) is probably a folklore.  

\begin{lemma}\label{L-2.1} Let $\{\mathcal{M}_t\}_{t\geq0}$ be an increasing filtration of von Neumann subalgebras of $\mathcal{M}$, and let $t \in [0,\infty) \mapsto K_t$ be a weakly measurable process such that $K_t \in \mathcal{M}_t$ and $\sup_{0\leq s \leq t}\Vert K_s\Vert_\infty < +\infty$ for all $t \geq 0$. If $t \in [0,\infty) \mapsto L_t := \int_0^t K_s\,ds$ defines a martingale adapted to $\{\mathcal{M}_t\}_{t\geq0}$, then $L_t = 0$ for all $t \geq 0$.   
\end{lemma}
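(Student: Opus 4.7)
This is the classical fact that an $L^2$-martingale which is also absolutely continuous in time must vanish, transferred to the tracial $W^*$-setting. My strategy is to compute $\Vert L_t\Vert_2^2 = \tau(L_t^* L_t)$ via a Riemann-type partition of $[0,t]$, use the martingale orthogonality of the increments to kill all cross terms, and then exploit the Lipschitz-in-$t$ bound inherited from $C:=\sup_{0\le s\le t}\Vert K_s\Vert_\infty < +\infty$ to show that the diagonal sum is $O(\text{mesh})$.

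Concretely, fix $t>0$ and a partition $0=t_0<t_1<\cdots<t_n=t$, and set $A_i := L_{t_{i+1}}-L_{t_i}$. For $i<j$ one has $A_i\in\mathcal{M}_{t_{i+1}}\subset\mathcal{M}_{t_j}$, while the martingale hypothesis gives $E_{\mathcal{M}_{t_j}}(A_j)=E_{\mathcal{M}_{t_j}}(L_{t_{j+1}})-L_{t_j}=0$; traciality together with the bimodule property of the $\tau$-preserving conditional expectation $E_{\mathcal{M}_{t_j}}$ then yields $\tau(A_i^* A_j) = \tau\bigl(A_i^*\,E_{\mathcal{M}_{t_j}}(A_j)\bigr) = 0$. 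Consequently
\[
\Vert L_t\Vert_2^2 \;=\; \sum_{i=0}^{n-1}\Vert A_i\Vert_2^2.
\]
Since $A_i = \int_{t_i}^{t_{i+1}} K_s\,ds$ and $\Vert K_s\Vert_2 \le \Vert K_s\Vert_\infty \le C$, we get $\Vert A_i\Vert_2 \le C(t_{i+1}-t_i)$, hence
\[
\Vert L_t\Vert_2^2 \;\le\; C^2\sum_{i=0}^{n-1}(t_{i+1}-t_i)^2 \;\le\; C^2\,t\cdot\max_i(t_{i+1}-t_i),
\]
which tends to $0$ as the mesh shrinks to $0$. Thus $L_t=0$ in $L^2(\mathcal{M},\tau)$, and therefore as an element of $\mathcal{M}$.

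The only non-automatic points are the preliminary ones: one has to check, from the weak measurability and local boundedness of $K_{\cdot}$, that the weak (Bochner-Pettis type) integral $L_t=\int_0^t K_s\,ds$ is a well-defined element of $\mathcal{M}$, that it is adapted to $\{\mathcal{M}_t\}_{t\ge 0}$ (so that the conditional expectation manipulations are legitimate), and that the norm inequality $\Vert A_i\Vert_2\le\int_{t_i}^{t_{i+1}}\Vert K_s\Vert_2\,ds$ really does pass through the integral. All of these are routine verifications involving only the $\tau$-preserving conditional expectations $E_{\mathcal{M}_s}:\mathcal{M}\to\mathcal{M}_s$ and do not pose any real difficulty; the martingale-orthogonality plus mesh-estimate argument displayed above is the whole substance of the proof.
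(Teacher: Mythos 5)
Your proof is correct and is essentially identical to the paper's: both use the martingale orthogonality of increments to write $\tau(L_t^*L_t)=\sum_i\Vert L_{t_{i+1}}-L_{t_i}\Vert_2^2$ and then the bound $\Vert L_{t_{i+1}}-L_{t_i}\Vert_2\le C(t_{i+1}-t_i)$ to conclude that $\Vert L_t\Vert_2^2\le C^2 t\cdot\max_i(t_{i+1}-t_i)\to 0$ as the mesh shrinks. The paper states this in one display and leaves the routine verifications implicit, exactly as you flag them.
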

\begin{proof} 
Since $L_t$ is a martingale, one has, for any division $0 = : t_0 < t_1 < \cdots < t_n := t$, 
\begin{align*} 
\tau(L_t^* L_t) = \sum_{i=1}^n \tau((L_{t_i}-L_{t_{i-1}})^*(L_{t_i}-L_{t_{i-1}}))
\leq t \big(\sup_{0\leq s \leq t}\Vert K_s\Vert_\infty\big)^2 \sup_{1\leq i\leq n} (t_i - t_{i-1}). 
\end{align*}
It follows that $L_t = 0$, since $\sup_{1\leq i\leq n} (t_i - t_{i-1})$ can arbitrarily be small. 
\end{proof} 

\begin{proposition}\label{P-2.2} Let $\{\mathcal{M}_t\}_{t\geq0}$ be as in Lemma \ref{L-2.1} such that $S_t \in \mathcal{M}_t$ for every $t \geq 0$. Let $t \in [0,\infty) \mapsto \Phi_t, \Phi'_t \in \mathcal{M}\otimes_{\mathrm{alg}}\mathcal{M}$ be operator-norm continuous biprocesses adapted to $\{\mathcal{M}_t\}_{t\geq 0}$ and $t \in [0,\infty) \mapsto K_t, K'_t \in \mathcal{M}$ be weakly measurable processes such that $\sup_{0\leq s\leq t}\Vert K_s\Vert_\infty < +\infty$ for every $t \geq 0$ and the same holds for $K'_t$. Then both $\Phi\,\mathbf{1}_{[0,t]}$ and $\Phi'\,\mathbf{1}_{[0,t]}$ fall in $\mathcal{B}_\infty^a$ {\rm(}see {\rm\cite[\S\S2.1]{BianeSpeicher:PTRF98})} for every $t\geq0$, and hence we have two free stochastic integrals $\int_0^t \Phi_s\,\sharp\,dS_s + \int_0^t K_s\,ds$ and $\int_0^t \Phi'_s\,\sharp\,dS_s + \int_0^t K'_s\,ds$ as in {\rm\cite[\S\S4.3] {BianeSpeicher:PTRF98}} for every $t \geq 0$. If those free stochastic integrals define the same process, then $\Phi = \Phi'$ holds and $K_t=K'_t$ does almost surely in $t$. 
\end{proposition}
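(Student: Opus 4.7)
The plan is to reduce by linearity. Setting $\Psi_t := \Phi_t - \Phi'_t$ and $H_t := K_t - K'_t$, it suffices to show that if $\int_0^t \Psi_s\,\sharp\,dS_s + \int_0^t H_s\,ds = 0$ for every $t\geq 0$, then $\Psi \equiv 0$ and $H_t = 0$ a.s.\ in $t$. The preliminary claim that $\Phi\,\mathbf{1}_{[0,t]}$ lies in $\mathcal{B}_\infty^a$ is immediate from the adaptedness, operator-norm continuity and uniform norm-bound of $\Phi$ combined with the definitions of \cite[\S\S2.1]{BianeSpeicher:PTRF98}, so both free stochastic integrals in the statement make sense.

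The heart of the argument is a Doob--Meyer-style separation of martingale and drift. By \cite[\S\S4.3]{BianeSpeicher:PTRF98} the free stochastic integral $M_t := \int_0^t \Psi_s\,\sharp\,dS_s$ is an $\mathcal{M}_t$-martingale, while the drift $L_t := \int_0^t H_s\,ds$ equals $-M_t$ by hypothesis. Hence $L_t$ is itself a martingale, and it is precisely of the form considered in Lemma \ref{L-2.1}. Applying that lemma forces $L_t = 0$ for every $t\geq 0$, from which $H_t = 0$ a.s.\ in $t$ follows by testing against a separating subset of the predual $\mathcal{M}_*$ and invoking the Lebesgue differentiation theorem.

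To extract $\Psi \equiv 0$, I would use that $L_t \equiv 0$ now gives $M_t \equiv 0$, so the free It\^{o} isometry of \cite[\S\S4.3]{BianeSpeicher:PTRF98} yields
\[
0 \;=\; \tau(M_t^* M_t) \;=\; \int_0^t \Vert \Psi_s \Vert_{2}^{2}\,ds
\]
in the appropriate biprocess $L^2$-norm, whence $\Psi_s = 0$ for almost every $s$. Operator-norm continuity of $\Psi$ upgrades this to $\Psi_s = 0$ for all $s \geq 0$. The one point that genuinely requires care is checking that the adaptedness and boundedness hypotheses on $K_t$ and $K'_t$ make $H_t$ eligible for Lemma \ref{L-2.1}; once this is in place, everything else is a routine invocation of the standard martingale property of free stochastic integrals together with the free It\^{o} isometry.
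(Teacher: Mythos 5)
Your proposal is correct and follows essentially the same route as the paper: reduce to the difference process, observe that the drift $\int_0^t (K_s-K'_s)\,ds$ equals a free stochastic integral and is therefore a martingale, kill it with Lemma \ref{L-2.1}, recover $\Phi=\Phi'$ from the It\^{o} isometry plus norm-continuity, and recover $K=K'$ a.e.\ by testing against a (countable, separating) family in the predual. The only detail worth making explicit, which the paper does, is that one should pass to a von Neumann subalgebra with separable predual so that the separating family can be taken countable and the exceptional null sets in $t$ can be combined.
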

\begin{proof} The first part is trivial; hence left to the reader. One has $\int_0^t (\Phi'_s - \Phi_s)\,\sharp\,dS_s = \int_0^t (K_s - K'_s)\,ds$, which must be zero by Lemma \ref{L-2.1} and \cite[Proposition 3.2.3]{BianeSpeicher:PTRF98}. Hence $\int_0^t \Phi_s\,\sharp\,dS_s = \int_0^t \Phi'_s\,\sharp\,dS_s$ and $\int_0^t K_s\,ds = \int_0^t K'_s\,ds$ hold for every $t>0$. The It\^{o} isometry \cite[\S\S3.1]{BianeSpeicher:PTRF98} immediately shows that $\Phi\,\mathbf{1}_{[0,t]} = \Phi'\,\mathbf{1}_{[0,t]}$ holds in $\mathcal{B}_2^a$ for every $t>0$, and hence $\Phi = \Phi'$ holds. We may and do assume that $\mathcal{M}$ has separable predual (with replacing it by its von Neumann subalgebra if necessary); thus one can choose a dense countable subset $\{\varphi_n\}_{n\in\mathbb{N}}$ of the predual of $\mathcal{M}$. One has $\int_{t_1}^{t_2} \varphi_n(K_s - K'_s)\,ds = 0$ for every $0 \leq t_1 < t_2 \lneqq \infty$ and $n \in \mathbb{N}$, which immediately implies that $K_t = K'_t$ holds almost surely in $t$.    
\end{proof}  

One can choose, for each $z \in \mathbb{C}^+:= \{z \in \mathbb{C}\,|\,\mathrm{Im}z > 0\}$, a rapidly decreasing function $f_z$ on $\mathbb{R}$ which coincides with $x \mapsto (z-x)^{-1}$ on a neighborhood of $[0,1]$, and thus
$dR(t,z) = d(f_z(X_t)) = (\partial f_z(X_t)\,\sharp\,\Xi_t)\,\sharp\,dS_t + 
(\partial f_z(X_t)\,\sharp\,Y_t + 1/2\Delta_{\Xi_t}f_z(X_t))\,dt$ holds by \cite[Proposition 4.3.4]{BianeSpeicher:PTRF98}. Here we do not recall the definitions of $\partial f_z(X_t)\,\sharp\,\Xi_t$, $\partial f_z(X_t)\,\sharp\,Y_t$ and $\Delta_{\Xi_t}f_z(X_t)$ (those can be found in \cite[\S\S4.3]{BianeSpeicher:PTRF98}, and remark that $\Vert \Delta_U f(X)\Vert_\infty$ can be estimated by $\mathcal{I}_2(f)\Vert U\Vert_\infty^2$ in the same way as in the discussion following \cite[Definition 4.1.1]{BianeSpeicher:PTRF98}). Here we  need only the following trivial fact: 
\begin{equation}\label{Eq-2.1} 
\sup\{\Vert \partial f_z(X_t)\,\sharp\,Y_t \Vert_\infty + 
\Vert\Delta_{\Xi_t}f_z(X_t)\Vert_\infty\,|\,t \geq 0\} < +\infty. 
\end{equation}
Write $M_t := \int_0^t(\partial f_z(X_s)\,\sharp\,\Xi_s)\,\sharp\,dS_s$, $Z_t := \partial f_z(X_t)\,\sharp\,Y_t + (1/2)\Delta_{\Xi_t}f_z(X_t)$ and $N_t := \int_0^t\Xi_s\,\sharp\,dS_s$ for short, and let $z \in \mathbb{C}^+$ be arbitrarily fixed. We have  
\begin{align*} 
0 
&= d\big(R(t,z)(zI-X_t)\big) 
= dR(t,z)\cdot(zI-X_t) + R(t,z)\cdot d(zI-X_t) - dM_t\cdot dN_t \\
&= dM_t\cdot(zI-X_t) + Z_t (zI-X_t)\,dt - R(t,z)\cdot dN_t - R(t,z)\cdot Y_t\,dt - dM_t\cdot dN_t,  
\end{align*}
and hence 
\begin{align*}
dM_t\cdot(zI-X_t) - R(t,z)\cdot dN_t = 
R(t,z)Y_t\,dt - Z_t (zI-X_t)\,dt + dM_t\cdot dN_t.
\end{align*} 
This formal computation can easily be justified by the rigorous formulas in \cite[\S\S4.1]{BianeSpeicher:PTRF98}. 
Note that $dM_t\cdot dN_t = \langle\langle \partial f_z(X_t)\,\sharp\,\Xi_t, \Xi_t\rangle\rangle\,dt$ by the free It\^{o} formula (see \cite[Definition 4.1.1]{BianeSpeicher:PTRF98} for the precise definition of $\langle\langle-,-\rangle\rangle$). Therefore, Proposition \ref{P-2.2} (which can be used thanks to \eqref{Eq-2.1}) shows that 
\begin{equation*}
\begin{aligned}
dM_t 
&= R(t,z)\cdot dN_t\cdot R(t,z) = \big((R(t,z)\otimes R(t,z))\,\sharp\,\Xi_t\big)\,\sharp\,dS_t, \\
Z_t\,dt 
&= R(t,z) Y_t R(t,z)\,dt + R(t,z)\cdot dN_t\cdot R(t,z)\cdot dN_t\cdot R(t,z). 
\end{aligned}
\end{equation*} 
It is easy to see, by the free It\^{o} formula again, that 
$$ 
dN_t\cdot R(t,z) \cdot dN_t 
= 
\big(-2\tau(X_t R(t,z))X_t + \tau(Q R(t,z))X_t + \tau(X_t R(t,z))Q\big)\,dt,
$$
and hence (the first part of) the next proposition follows.  

\begin{proposition}\label{P-2.3} For every $z \in \mathbb{C}^+$ the resolvent process $R(t,z) := (zI-X_t)^{-1}$ satisfies{\rm:}  
$$
dR(t,z) = \big((R(t,z)\otimes R(t,z))\,\sharp\,\Xi_t\big)\,\sharp\,dS_t + Z(t,z)\,dt 
$$ 
with 
\begin{equation}\label{Eq-2.2}
\begin{aligned} 
Z(t,z) =\ &\tau(P)R(t,z)QR(t,z) - R(t,z)X_t R(t,z)  
-2\tau(X_t R(t,z)) R(t,z)X_t R(t,z) \\
&+ \tau(Q R(t,z))R(t,z)X_t R(t,z) 
+\tau(X_t R(t,z)) R(t,z)Q R(t,z). 
\end{aligned}
\end{equation} 
Moreover, the Cauchy transform $G(t,z) := \tau(R(t,z))$, $z \in \mathbb{C}^+$ satisfies the following partial differential equation {\rm(}PDE for short{\rm):} 
\begin{equation*}
\frac{\partial G}{\partial t} = \frac{\partial}{\partial z}\left[
(z^2-z)G^2 + (2-\tau(P)-\tau(Q)-z)G - \frac{(1-\tau(P))(1-\tau(Q))}{z}\right]. 
\end{equation*}
\end{proposition}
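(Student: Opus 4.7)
Most of the first assertion is already in place in the paragraphs preceding the statement: the SDE $dR(t,z) = dM_t + Z_t\,dt$ with $dM_t = \bigl((R(t,z)\otimes R(t,z))\sharp \Xi_t\bigr)\sharp dS_t$ has been derived, and the explicit form \eqref{Eq-2.2} of $Z(t,z)$ will follow once we evaluate the quadratic variation $dN_t\cdot R(t,z)\cdot dN_t$. Writing $V_t := U_tPU_t^*$, so that $\Xi_t = \sqrt{-1}(Q\otimes V_tQ - QV_t\otimes Q)$ and $X_t = QV_tQ$, I would expand $(\Xi_t\sharp dS_t)\,R(t,z)\,(\Xi_t\sharp dS_t)$ into its four cross-terms and apply the free It\^o rule $(a\,dS_t\,b)\,c\,(a'\,dS_t\,b') = a\,\tau(bca')\,b'\,dt$ to each. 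Using cyclicity of $\tau$, the idempotence $V_t^2 = V_t$, and $QX_t = X_tQ = X_t$, the four terms collapse to $\bigl(-2\tau(X_tR(t,z))X_t + \tau(QR(t,z))X_t + \tau(X_tR(t,z))Q\bigr)\,dt$, which combined with the formal manipulation in the text yields \eqref{Eq-2.2}.

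For the PDE, the plan is to apply $\tau$ to the SDE for $R(t,z)$. The free stochastic integral is a centered martingale, so its trace vanishes, leaving $\partial G/\partial t = \tau(Z(t,z))$. I would then rewrite every term of $\tau(Z(t,z))$ as a function of $G$ and $G' := \partial_z G$ using two elementary identities: $X_tR(t,z) = zR(t,z) - I$ and $\partial_z R(t,z) = -R(t,z)^2$, which give
\[
\tau(X_tR(t,z)) = zG - 1, \quad \tau(R(t,z)^2) = -G', \quad \tau(X_tR(t,z)^2) = -zG' - G.
\]
The key geometric input is that $X_t$ is supported under $Q$: from $QX_t = X_tQ = X_t$ one obtains $QR(t,z) = R(t,z)Q$ and $R(t,z)$ acts as $z^{-1}I$ on $\mathrm{range}(I-Q)$, i.e.\ $(I-Q)R(t,z) = (I-Q)/z$. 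This yields
\[
\tau(QR(t,z)) = G - \frac{1-\tau(Q)}{z}, \qquad \tau(QR(t,z)^2) = -G' - \frac{1-\tau(Q)}{z^2}.
\]

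Substituting these six identities into $\tau(Z(t,z))$ and grouping terms by type ($GG'$, $G^2$, $G'$, $G$, and $z^{-2}$) should reproduce the $z$-derivative of $(z^2-z)G^2 + (2-\tau(P)-\tau(Q)-z)G - (1-\tau(P))(1-\tau(Q))/z$ term by term. The computation is entirely routine; the only non-obvious cancellation occurs in the coefficient of $z^{-2}$, where $-\tau(P)(1-\tau(Q)) + (1-\tau(Q)) = (1-\tau(P))(1-\tau(Q))$ produces the correct constant. I expect no real obstacle beyond careful bookkeeping, the substantive inputs being the free It\^o rule, the commutation $QR(t,z) = R(t,z)Q$, and the support identity $(I-Q)R(t,z) = (I-Q)/z$.
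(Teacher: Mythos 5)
Your proposal is correct and follows essentially the same route as the paper: the drift is identified via the uniqueness of the Doob--Meyer decomposition and the free It\^{o} evaluation of $dN_t\cdot R(t,z)\cdot dN_t$ (your four-term expansion with $V_t^2=V_t$ and $QX_t=X_tQ=X_t$ reproduces exactly the paper's formula), and the PDE is obtained by tracing $Z(t,z)$ and substituting the same identities $\tau(AR^2)=-\partial_z\tau(AR)$, $\tau(X_tR)=zG-1$ and $\tau(QR)=G-(1-\tau(Q))/z$ coming from $R=QRQ+z^{-1}(I-Q)$. No gaps; the bookkeeping, including the $(1-\tau(P))(1-\tau(Q))/z^2$ cancellation you flag, checks out.
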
    
\begin{proof} The first part has already been obtained. Hence it suffice to show the desired PDE. Remark that $Z_t = Z(t,z)$ is operator-norm continuous in $t$ thanks to the fact at the end of \S1. By the martingale property, $G(t,z) = \tau(R(t,z)) = \tau(R(0,z)) + \int_0^t \tau(Z_s)\,ds$, and hence, by \eqref{Eq-2.2}, $\frac{\partial G}{\partial t} = \tau(Z_t) = \tau(P)\tau(Q R(t,z)^2) - \tau(X_t R(t,z)^2) - 2\tau(X_t R(t,z))\tau(X_t R(t,z)^2) + \tau(QR(t,z))\tau(X_t R(t,z)^2) + \tau(X_t R(t,z))\tau(Q R(t,z)^2)$. Note that $\tau(A R(t,z)^2) = -\frac{\partial}{\partial z}\tau(AR(t,z))$ for any $A \in \mathcal{M}$. Since $R(t,z) = QR(t,z)Q + z^{-1}(I-Q)$ and $I = (zI-X_t)R(t,z) = zR(t,z) - X_t R(t,z)$, we have $\tau(Q R(t,z)) = G(t,z)-\frac{1-\tau(Q)}{z}$ and $\tau(X_t R(t,z)) = zG(t,z)-1$. These altogether imply the desired PDE.   
\end{proof}

\section{Analysis of Probability Distribution of $X_t$}

Let $\nu_t$ be the probability distribution of $X_t$, i.e., a unique probability measure on $[0,1]$ determined by $G(t,z) = \int_{[0,1]} \frac{1}{z-x}\,\nu_t(dx)$, for $z \in \mathbb{C}^+$. Define $c_0(t) := \tau((I-U_t P U_t^*)\wedge(I-Q) + (I-U_t P U_t^*)\wedge Q + U_t P U_t^* \wedge(I-Q))$, $c_1(t) := \tau(U_t P U_t^* \wedge Q)$, $t \geq 0$. Several facts \cite[Corollary 1.7, Proposition 8.7, Corollary 8.6 and Lemma 12.5]{Voiculescu:AdvMath99} on liberation gradients with e.g.~\cite[(1.3)]{HiaiUeda:AIHP09} altogether show that the projections $U_t P U_t^*, Q$ are in generic position for every $t > 0$ and moreover that both $c_0(t) = 1-\min\{\tau(P),\tau(Q)\}$ and $c_1(t) = \max\{\tau(P)+\tau(Q)-1,0\}$ hold for every $t > 0$. (We will give its detailed explanation in Remark \ref{R-3.5} at the end of this section for the reader's convenience.) By a well-known fact (see e.g.~\cite[Solution 122]{Halmos:Book}) one easily sees that the functions $t \mapsto c_i(t)$ are upper semicontinuous, and hence $c_0(0) \geq c_0(+0) = 1-\min\{\tau(P),\tau(Q)\}$ and $c_1(0) \geq c_1(+0) = \max\{\tau(P)+\tau(Q)-1,0\}$.   

Set $\mu_t := \nu_t - (1-\min\{\tau(P),\tau(Q)\})\delta_0 - (\max\{\tau(P)+\tau(Q)-1,0\})\delta_1$, $t \geq 0$, which defines a positive measure on $[0,1]$, since $c_i(0) \geq c_i(+0)$, $i=0,1$. When $t > 0$, $\mu_t$ agrees with the restriction of $\nu_t$ to $(0,1)$. Moreover, $\mu_0$ agrees with the restriction of $\nu_0$ to $(0,1)$ (or equivalently, both $c_i(0) = c_i(+0)$, $i=0,2$, hold) if and only if $P,Q$ are in generic position. (See e.g.~the proof of \cite[Theorem 3.2]{HiaiPetz:ACTA06}.) Denote by $F(t,z)$ the Cauchy transform of $\mu_t$ whose domain clearly contains $\mathbb{C}\setminus[0,1]$. A tedious computation derives the following PDE from Proposition \ref{P-2.3}:     
\begin{equation}\label{Eq-3.1}
\frac{\partial F}{\partial t} = \frac{\partial}{\partial z}\left[(z^2-z)F^2 + a(z-1)F + bzF\right]
\end{equation}       
with $a := |\tau(P)-\tau(Q)|$ and $b:=|\tau(P)+\tau(Q)-1|$. 

Similarly to Geronimus's work \cite[\S30]{Geronimus:AMST62} (based upon the so-called Szeg\"{o} mapping) we transform $z \in \mathbb{C}\setminus[0,1] \mapsto \zeta \in \mathbb{D}$, the open unit disk, by $z = (2+\zeta+\zeta^{-1})/4$ or $\zeta = 2z-1 + 2\sqrt{z^2-z}$ (note that $\zeta \in \mathbb{D}$ determines the branch of $\sqrt{z^2-z}$ with a negative real value at $z=2$). Set $L(t,\zeta) := -\sqrt{z^2-z}\,F(t,z)$. Since $\frac{d\zeta}{dz} = \zeta/\sqrt{z^2-z}$, the PDE \eqref{Eq-3.1} becomes 
\begin{equation}\label{Eq-3.2}
\frac{\partial L}{\partial t} + \zeta\frac{\partial}{\partial\zeta}\left[\left(L+a\frac{1-\zeta}{1+\zeta} + b\frac{1+\zeta}{1-\zeta}\right)L\right] = 0. 
\end{equation}  
Letting $\tilde{\mu}_t(d\theta) = \mu_t(dx)$ with $x = \cos^2(\theta/2) = \frac{1}{2}(1 + \cos\theta)$, $\theta \in [0,\pi]$, we have 
\begin{align*} 
L(t,\zeta) 
&= 
\frac{1}{4}\left(\frac{1}{\zeta}-\zeta\right)\int_{[0,\pi]}\frac{1}{\frac{1}{4}\big(2+\zeta+\frac{1}{\zeta}\big)-\cos^2(\theta/2)}\,\tilde{\mu}_t(d\theta) \\
&= 
\frac{1}{4}\left(\frac{1}{\zeta}-\zeta\right)\int_{[0,\pi]}\frac{1}{\frac{1}{4}\big(2+\zeta+\frac{1}{\zeta}\big)-\frac{1}{4}(2+e^{\sqrt{-1}\theta}+e^{-\sqrt{-1}\theta})}\,\tilde{\mu}_t(d\theta) \\
&= 
\int_{[0,\pi]} \left(-1 + \frac{e^{\sqrt{-1}\theta}}{e^{\sqrt{-1}\theta}-\zeta} + \frac{e^{-\sqrt{-1}\theta}}{e^{-\sqrt{-1}\theta}-\zeta}\right)\,\tilde{\mu}_t(d\theta),  
\end{align*}
and thus the symmetrization $\hat{\mu}_t := \frac{1}{2}(\tilde{\mu}_t + (\tilde{\mu}_t\!\upharpoonright_{(0,\pi)})\circ j^{-1})$ with $j : \theta \in (0,\pi) \mapsto -\theta\in (-\pi,0)$ satisfies 
\begin{equation}\label{Eq-3.3}
L(t,\zeta) = \int_{(-\pi,\pi]}\frac{e^{\sqrt{-1}\theta}+\zeta}{e^{\sqrt{-1}\theta}-\zeta}\,\hat{\mu}_t(d\theta). 
\end{equation}

Define $H(t,\zeta) := (L(t,\zeta)+a\frac{1-\zeta}{1+\zeta}+b\frac{1+\zeta}{1-\zeta})L(t,\zeta)$, and by \eqref{Eq-3.2} we have 
\begin{equation}\label{Eq-3.4} 
\frac{\partial H}{\partial t} + \zeta\Big(2L(t,\zeta)+a\frac{1-\zeta}{1+\zeta}+b\frac{1+\zeta}{1-\zeta}\Big)\frac{\partial H}{\partial\zeta} = 0. 
\end{equation} 
As usual, let us consider the ordinary differential equations (ODE's for short) of characteristic curve $t \mapsto \big(g_t(\zeta),u_t(\zeta) := H(t,g_t(\zeta))\big)$ associated with the PDE \eqref{Eq-3.4}: 
\begin{align}    
\dot{g}_t(\zeta) &= g_t(\zeta)\left[2L(t,g_t(\zeta)) + a \frac{1-g_t(\zeta)}{1+g_t(\zeta)} + b \frac{1+g_t(\zeta)}{1-g_t(\zeta)}\right], \quad g_0(\zeta) = \zeta, \label{Eq-3.5} \\
\dot{u}_t(\zeta) &= 0, \quad u_0(\zeta) = H(0,\zeta). \label{Eq-3.6}
\end{align}
Here the dot symbol ($\dot{\,}$) denotes the differentiation in $t$. The ODE \eqref{Eq-3.5} is nothing less than the {\it radial Loewner} (or {\it L\"{o}wner--Kufarev}) {\it equation} (or more precisely radial Loewner ODE) determined by one parameter family of measures $t \mapsto 2\hat{\mu}_t + a\delta_{\pi} + b \delta_0$. Note by e.g.~\cite[(1.3)]{HiaiUeda:AIHP09} that $2\hat{\mu}_t + a\delta_{\pi} + b \delta_0$ defines a probability measure on $\mathbb{T} = (-\pi,\pi]$ for every $t\geq0$. (This follows from the fact that $U_t P U_t^*, Q$ are in generic position for every $t > 0$ as remarked before and $\hat{\mu}_t \to \hat{\mu}_0$ weakly as $t \searrow 0$.) Thus, by a standard fact, see e.g.~\cite[Theorem 4.14]{Lawler:Book}, the radial Loewner ODE \eqref{Eq-3.5} defines a unique one-parameter family of conformal transformations $g_t : \mathbb{D}_t := \{\zeta \in \mathbb{D}\,|\,T_\zeta > t\} \twoheadrightarrow \mathbb{D}$ with $g_t(0) = 0$ and $g'_t(0) = e^t$ (the prime symbol (${}'$) denotes the differentiation in $\zeta$), where $T_\zeta$, $\zeta\in\mathbb{D}$, is the supremum of all $T$ such that a solution of \eqref{Eq-3.5} exists until time $T$ in such a way that $g_t(\zeta) \in \mathbb{D}$ holds for every $t \leq T$.       
It is known, see e.g.~\cite[Remark 4.15]{Lawler:Book} again, that the inverse $f_t := g_t^{-1} : \mathbb{D} \twoheadrightarrow \mathbb{D}_t$ satisfies 
\begin{equation}\label{Eq-3.7} 
\dot{f}_t(\zeta) = -\zeta\,f'_t(\zeta)\left[\int_{(-\pi,\pi]}\frac{e^{\sqrt{-1}\theta}+\zeta}{e^{\sqrt{-1}\theta}-\zeta}\,(2\hat{\mu}_t+a\delta_{\pi}+b\delta_0)(d\theta)\right], \quad f_0(\zeta) = \zeta, 
\end{equation}
a {\it radial Loewner PDE}. The ODE \eqref{Eq-3.6} shows that $H(t,g_t(\zeta)) = u_t(\zeta) = u_0(\zeta) = H(0,\zeta)$, and hence $H(t,\zeta) = H(0,f_t(\zeta))$ holds for all $\zeta \in \mathbb{D}$. This implies that
\begin{equation}\label{Eq-3.8}
\begin{aligned} 
L(t,\zeta) = -\frac{1}{2}\left(a\frac{1-\zeta}{1+\zeta}+b\frac{1+\zeta}{1-\zeta}\right) + \frac{1}{2}\sqrt{\left(a\frac{1-\zeta}{1+\zeta}+b\frac{1+\zeta}{1-\zeta}\right)^2+4H(0,f_t(\zeta))},
\end{aligned} 
\end{equation}
where $\sqrt{-}$ is the principal branch. The discussions so far are summarized as follows. 

\begin{proposition}\label{P-3.1} Let $\nu_t$ be the probability distribution of $X_t$. Define the positive measure $\mu_t := \nu_t - (1-\min\{\tau(P),\tau(Q)\})\delta_0 - (\max\{\tau(P)+\tau(Q)-1,0\})\delta_1$, and transform it to the positive measure $\tilde{\mu}_t(d\theta) := \mu_t(dx)$ on $[0,\pi]$ by $x=\cos^2(\theta/2)$. Then $\mu_t$ coincides with the restriction of $\nu_t$ to $(0,1)$ for every $t > 0$, and moreover so does for $t=0$ {\rm(}or equivalently, $\mu_0$ has no atom at both $0$ and $1${\rm)} if and only if the given two projections $P, Q$ are in generic position.  

Set $L(t,\zeta) := \int_{(-\pi,\pi)}\frac{e^{\sqrt{-1}\theta}+\zeta}{e^{\sqrt{-1}\theta}-\zeta}\,\hat{\mu}_t(d\theta)$, $\zeta \in \mathbb{D}$, with the symmetrization $\hat{\mu}_t := \frac{1}{2}(\tilde{\mu}_t + (\tilde{\mu}_t\!\upharpoonright_{(0,\pi)})\circ j^{-1})$ with $j : \theta \in (0,\pi) \mapsto -\theta\in (-\pi,0)$. Then the unique one-parameter, subordinate family of conformal self-maps $f_t$ on $\mathbb{D}$ obtained from the radial Loewner PDE \eqref{Eq-3.7} driven by the probability measures $2\hat{\mu}_t + a\delta_{\pi} + b\delta_0$ gives the following subordination relation{\rm:}  
\begin{equation*}
\begin{aligned} 
\Big(L(t,\zeta)+a\frac{1-\zeta}{1+\zeta}+b\frac{1+\zeta}{1-\zeta}\Big)L(t,\zeta) 
= \Big(L(0,f_t(\zeta))+a\frac{1-f_t(\zeta)}{1+f_t(\zeta)}+b\frac{1+f_t(\zeta)}{1-f_t(\zeta)}\Big)L(0,f_t(\zeta))
\end{aligned} 
\end{equation*}
with $a = |\tau(P)-\tau(Q)|$ and $b = |\tau(P)+\tau(Q)-1|$.  
\end{proposition}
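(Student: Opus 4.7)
The plan is to assemble Proposition \ref{P-3.1} from the chain of calculations sketched in the paragraphs preceding its statement, which boils down to an atom analysis together with a method-of-characteristics solution of a quasi-linear PDE.

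For the atom part I would invoke the Voiculescu-type results recalled just before the proposition: for every $t > 0$, $U_t P U_t^*$ and $Q$ are in generic position, and the trace identities yield $\nu_t(\{0\}) = 1 - \min\{\tau(P), \tau(Q)\}$ and $\nu_t(\{1\}) = \max\{\tau(P)+\tau(Q)-1, 0\}$; subtracting these atoms gives $\mu_t = \nu_t|_{(0,1)}$ for $t > 0$. The case $t = 0$ is then a restatement of the definition of generic position combined with the upper semicontinuity of $t \mapsto c_i(t)$ noted earlier.

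For the subordination, I would first derive the PDE \eqref{Eq-3.1} for the Cauchy transform $F(t,z)$ of $\mu_t$ by writing $G(t,z) = F(t,z) + \nu_t(\{0\})/z + \nu_t(\{1\})/(z-1)$ and substituting into the PDE of Proposition \ref{P-2.3}, simplifying using the explicit atom values. Next, apply the Szeg\H{o}-type change of variable $z = (2+\zeta+\zeta^{-1})/4$ with $L(t,\zeta) := -\sqrt{z^2-z}\,F(t,z)$; since $d\zeta/dz = \zeta/\sqrt{z^2-z}$, the chain rule converts \eqref{Eq-3.1} into the divergence form \eqref{Eq-3.2}, and direct integration against the symmetrized measure gives the Herglotz-type representation \eqref{Eq-3.3}. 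Then introduce $H(t,\zeta) := (L + a(1-\zeta)/(1+\zeta) + b(1+\zeta)/(1-\zeta))L$; \eqref{Eq-3.2} becomes the quasi-linear first-order PDE \eqref{Eq-3.4}, whose characteristic ODE in $\zeta$ is precisely the radial Loewner ODE \eqref{Eq-3.5} driven by $2\hat{\mu}_t + a\delta_\pi + b\delta_0$. A case split on the sign of $\tau(P)+\tau(Q)-1$ together with $\mu_t([0,1]) = \min\{\tau(P),\tau(Q)\} - \max\{\tau(P)+\tau(Q)-1,0\}$ verifies that this driving measure has total mass $1$, so standard radial Loewner theory \cite[Thm.~4.14, Rem.~4.15]{Lawler:Book} produces the conformal maps $g_t : \mathbb{D}_t \to \mathbb{D}$ and their inverses $f_t$ satisfying \eqref{Eq-3.7}. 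Finally \eqref{Eq-3.6} says $H$ is constant along characteristics, so $H(t,\zeta) = H(0, f_t(\zeta))$, which is exactly the claimed subordination.

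The bulk of the work lies in the algebraic derivations of \eqref{Eq-3.1} and \eqref{Eq-3.2}; these are mechanical but error-prone enough to deserve some care. The main conceptual subtlety, and the place I expect the real obstacle, is that the characteristic ODE \eqref{Eq-3.5} is not autonomous: its driving data $\hat{\mu}_t$ depends on $t$ through the unknown $L$ itself. Recognizing that the total-mass identity nevertheless makes this a bona fide driven radial Loewner equation, so that the standard existence/uniqueness theory applies and the characteristic flow sweeps out a genuine one-parameter family of conformal self-maps of $\mathbb{D}$, is the key structural observation that turns the PDE bookkeeping into an actual solution.
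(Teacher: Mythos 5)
Your proposal is correct and follows essentially the same route as the paper: the atom analysis via Voiculescu's liberation results and upper semicontinuity of $c_i(t)$, the derivation of \eqref{Eq-3.1} from Proposition \ref{P-2.3}, the Szeg\H{o}-type substitution leading to \eqref{Eq-3.2}--\eqref{Eq-3.4}, and the method of characteristics identifying \eqref{Eq-3.5} as a radial Loewner ODE driven by the probability measures $2\hat{\mu}_t + a\delta_\pi + b\delta_0$, so that constancy of $H$ along characteristics yields the subordination. Your closing observation is also the right one to flag, with the small clarification that the family $\hat{\mu}_t$ is already determined by the free-probabilistic construction (and is weakly continuous in $t$), so the Loewner equation is genuinely driven by known data and the standard theory applies directly.
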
 

The next corollary is a specialization of the above proposition.  

\begin{corollary}\label{C-3.2} Let $L(t,\zeta)$, $f_t(\zeta)$ be as in Proposition \ref{P-3.1}, set $g_t(\zeta):=f_t^{-1}(\zeta)$, and suppose that $\tau(P)=\tau(Q)=1/2$ or equivalently $a=b=0$. Then 
\begin{itemize}
\item $L(t,\zeta) = L(0,f_t(\zeta))$, that is, $L(t,\zeta)$ is subordinate to $L(s,\zeta)$ for $s < t$, 
\item $g_t(\zeta) = \zeta e^{2t L(0,\zeta)}$ and $f_t(\zeta) = \zeta e^{-2t L(t,\zeta)}$,  
\item $\mathrm{Re}L(t,\zeta) = (\log|\zeta|-\log|f_t(\zeta)|)/2t$, $t>0$ and $\zeta \in \mathbb{D}\setminus\{0\}$.
\end{itemize} 
\end{corollary}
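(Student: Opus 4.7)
My plan is to specialize the subordination identity of Proposition \ref{P-3.1} to $a = b = 0$ and then read the explicit formulas for $g_t$ and $f_t$ directly off the radial Loewner ODE \eqref{Eq-3.5}. With $a = b = 0$, the identity of Proposition \ref{P-3.1} collapses to $L(t, \zeta)^2 = L(0, f_t(\zeta))^2$ on $\mathbb{D}$. Since $L(s, \cdot)$ is a Herglotz integral against the positive measure $\hat{\mu}_s$, and since $\mathrm{Re}\,\frac{e^{\sqrt{-1}\theta} + \zeta}{e^{\sqrt{-1}\theta} - \zeta} = (1 - |\zeta|^2)/|e^{\sqrt{-1}\theta} - \zeta|^2 > 0$ for $\zeta \in \mathbb{D}$, each $L(s, \cdot)$ maps $\mathbb{D}$ into the closed right half-plane. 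Two analytic functions on the connected domain $\mathbb{D}$ with non-negative real part whose squares coincide must themselves coincide (a continuous choice of square root is forced by the sign constraint), yielding the first bullet $L(t, \zeta) = L(0, f_t(\zeta))$.

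Next, I would substitute this subordination into the radial Loewner ODE \eqref{Eq-3.5}. Evaluating at $g_t(\zeta)$, one has $L(t, g_t(\zeta)) = L(0, f_t(g_t(\zeta))) = L(0, \zeta)$, so \eqref{Eq-3.5} becomes the linear ODE $\dot{g}_t(\zeta) = 2 g_t(\zeta) L(0, \zeta)$ with initial condition $g_0(\zeta) = \zeta$; treating $\zeta$ as a parameter, the unique solution is $g_t(\zeta) = \zeta \exp(2 t L(0, \zeta))$. The companion formula for $f_t$ is obtained by inversion: writing $g_t(f_t(\zeta)) = \zeta$ and using $L(0, f_t(\zeta)) = L(t, \zeta)$ yields $\zeta = f_t(\zeta) \exp(2 t L(t, \zeta))$, i.e., $f_t(\zeta) = \zeta \exp(-2 t L(t, \zeta))$.

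The third bullet is then immediate: taking absolute values in $f_t(\zeta) = \zeta \exp(-2 t L(t, \zeta))$ gives $|f_t(\zeta)| = |\zeta| \exp(-2 t\,\mathrm{Re}\,L(t, \zeta))$, so that $\log|f_t(\zeta)| = \log|\zeta| - 2 t\,\mathrm{Re}\,L(t, \zeta)$, which rearranges to the claimed formula for $t > 0$ and $\zeta \in \mathbb{D} \setminus \{0\}$.

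The only subtle step I anticipate is the sign determination in the first bullet; the remaining steps reduce to a one-line linear ODE and an algebraic inversion. The sign issue is handled by the strict positivity of the real part of Herglotz integrals (strict positivity holds on $\mathbb{D}$ unless $\hat{\mu}_s$ is the zero measure, in which case all assertions hold trivially).
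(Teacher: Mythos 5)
Your proof is correct and follows essentially the same route as the paper: extract the exact subordination $L(t,\zeta)=L(0,f_t(\zeta))$ from Proposition \ref{P-3.1} with $a=b=0$, substitute into the radial Loewner ODE \eqref{Eq-3.5} to get $\dot g_t(\zeta)=2g_t(\zeta)L(0,\zeta)$, solve, invert, and take moduli. Your explicit sign determination via the (strict) positivity of $\mathrm{Re}\,L$ is the point the paper handles implicitly through the principal branch in \eqref{Eq-3.8}, and it is a welcome clarification rather than a deviation.
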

\begin{proof} Under the assumption here the subordination relation in Proposition \ref{P-3.1} turns out to be the exact subordination $L(t,\zeta) = L(0,f_t(\zeta))$. This together with \eqref{Eq-3.5} implies that $\dot{g}_t(\zeta) = 2g_t(\zeta)L(t,g_t(\zeta)) = 2g_t(\zeta)L(0,\zeta)$ . This ODE can easily be solved as $g_t(\zeta) = \zeta e^{2t L(0,\zeta)}$, implying $\zeta = f_t(\zeta)e^{2t L(0,f_t(\zeta))} = f_t(\zeta)e^{2t L(t,\zeta)}$. The final assertion immediately follows. \end{proof} 

This allows us to prove some properties of $\hat{\mu}_t$ by analyzing $f_t(\zeta)$ and/or $g_t(\zeta)$ when $\tau(P)=\tau(Q)=1/2$, but we give a more useful observation as the next proposition. The proposition immediately follows from only \eqref{Eq-3.2} and \eqref{Eq-3.3}. This means that the proof of the main result of the present notes (Theorem \ref{T-4.3}) needs only a few pages.   

\begin{proposition}\label{P-3.3} Under the same assumption as in Corollary 3.2, $\{2\hat{\mu}_{t/2}\}_{t\geq0}$ is identical to the one-parameter semigroup of probability distributions associated with a free unitary multiplicative Brownian motion with initial distribution $2\hat{\mu}_0$. 
\end{proposition}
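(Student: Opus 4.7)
The plan is to identify the PDE satisfied by the transform $L(t,\zeta)$ in the case $a=b=0$ with the PDE governing the distribution semigroup of a free unitary multiplicative Brownian motion, and then appeal to uniqueness given the common initial datum $2\hat{\mu}_0$. The whole argument will rest on \eqref{Eq-3.2} and \eqref{Eq-3.3} alone, so the exact subordination from Corollary~\ref{C-3.2} is not logically needed here.

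First I would specialize \eqref{Eq-3.2} to $a=b=0$ to obtain $\partial_t L + 2L\,\zeta\,\partial_\zeta L = 0$, and rewrite \eqref{Eq-3.3} (using the symmetry of $\hat{\mu}_t$ to identify positive- and negative-frequency Fourier coefficients) as $L(t,\zeta) = \frac{1}{2} + \psi_{2\hat{\mu}_t}(\zeta)$, where $\psi_\mu(\zeta):=\sum_{n\geq 1}\zeta^n\int e^{\sqrt{-1}n\theta}\,d\mu(\theta)$ denotes the usual $\psi$-transform of a positive measure $\mu$ on $\mathbb{T}$. Setting $\tilde{\psi}(s,\zeta):=\psi_{2\hat{\mu}_{s/2}}(\zeta)$ and rescaling time then converts the PDE into
\begin{equation*}
\frac{\partial \tilde{\psi}}{\partial s} + \Bigl(\frac{1}{2} + \tilde{\psi}\Bigr)\zeta\,\frac{\partial \tilde{\psi}}{\partial \zeta} = 0.
\end{equation*}

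Next I would derive the same PDE for the distribution semigroup of a free unitary multiplicative Brownian motion with initial distribution $2\hat{\mu}_0$: choose a unitary $v$ of distribution $2\hat{\mu}_0$ freely independent from $\{S_t\}_{t\geq0}$, let $V_s$ be the unique unitary solution of $dV_s = \sqrt{-1}\,dS_s\,V_s - \frac{1}{2}V_s\,ds$ with $V_0=v$ (so that the law $\rho_s$ of $V_s$ is by definition the semigroup we want to match), and apply the free-It\^{o} Leibniz rule to $d(V_s^n)$ together with the product rule $dS_s\,A\,dS_s = \tau(A)\,ds$ for adapted $A$. After taking the trace (the martingale part contributes nothing) one obtains
\begin{equation*}
\dot{m}_n(s) = -\frac{n}{2}\Bigl(m_n(s) + \sum_{l=1}^{n-1}m_l(s)\,m_{n-l}(s)\Bigr),\qquad m_n(s):=\tau(V_s^n),
\end{equation*}
which assembles into $\partial_s\psi_{\rho_s} + \bigl(\frac{1}{2}+\psi_{\rho_s}\bigr)\zeta\,\partial_\zeta\psi_{\rho_s}=0$ for $\psi_{\rho_s}(\zeta)=\sum_{n\geq 1}m_n(s)\zeta^n$.

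Finally, $\tilde{\psi}$ and $\psi_{\rho_\bullet}$ satisfy the same first-order quasi-linear PDE with the same initial profile $\psi_{2\hat{\mu}_0}$; moreover, the moment ODEs are triangular in the sense that $\dot{m}_n$ depends only on $m_1,\ldots,m_n$, so they are uniquely solvable by induction on $n$. Hence $2\hat{\mu}_{s/2} = \rho_s$ for every $s \geq 0$, which is the claim. The step requiring real care is the free-It\^{o} bookkeeping in computing $d(V_s^n)$, but it closely parallels the derivation of \eqref{Eq-3.1} given earlier in the notes and uses nothing beyond the framework of \cite{BianeSpeicher:PTRF98}.
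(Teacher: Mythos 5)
Your proof is correct and takes essentially the same route as the paper: both reduce \eqref{Eq-3.2} with $a=b=0$ to the PDE $\dot\psi + \zeta(\psi+1/2)\psi' = 0$ for the moment generating function of $2\hat\mu_{t/2}$, and both conclude by matching the triangular moment recursion $\dot m_n = -\tfrac n2 m_n - \sum_{l=1}^{n-1} l\,m_l m_{n-l}$ (your form is the same after symmetrizing the sum) against that of the free unitary Brownian motion started at a unitary of law $2\hat\mu_0$. The only cosmetic difference is that you re-derive the target moment ODE directly from the free SDE for $V_s$ via free It\^o calculus, whereas the paper cites the proof of \cite[Proposition 10.8]{Voiculescu:AdvMath99} for that PDE and then verifies the power-series recursion; the underlying computation is identical.
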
  
\begin{proof} Since $\hat{\mu}_t$ is symmetric, we have $\psi(t,\zeta) := \int_{(-\pi,\pi]}\frac{\zeta e^{\sqrt{-1}\theta}}{1-\zeta e^{\sqrt{-1}\theta}}\,(2\hat{\mu}_{t/2})(d\theta) = L(t/2,\zeta)-1/2$, the moment generating function of the measure $2\hat{\mu}_{t/2}$. The PDE \eqref{Eq-3.2} can easily be transformed into 
\begin{equation}\label{Eq-3.9}
\dot{\psi} + \zeta(\psi+1/2)\psi'=0. 
\end{equation}
This is the PDE that the moment generating function of a free unitary multiplicative Brownian motion satisfies, see e.g.~the proof of \cite[Proposition 10.8]{Voiculescu:AdvMath99}, and hence the desired assertion follows as seen below. Let $U$ be a unitary random variable with distribution $2\hat{\mu}_0$, which is freely independent of $\{U_t\}_{t\geq0}$. Set $\tilde{\psi}(t,\zeta) := \tau((I-\zeta U_t U)^{-1}-I)$, $\zeta \in \mathbb{D}$, the moment generating function of $U_t U$. Then $\tilde{\psi}$ satisfies the same PDE \eqref{Eq-3.9}. Write $\psi(t,\zeta) = \sum_{n=1}^\infty c_n(t)\zeta^n$, $\tilde{\psi}(t,\zeta) = \sum_{n=1}^\infty \tilde{c}_n(t)\zeta^n$. Developing \eqref{Eq-3.9} into power series as above we see that both the coefficients $c_n$ and $\tilde{c}_n$ must satisfy that $\dot{f_1} = -\frac{1}{2}f_1$, $\dot{f_n} = -\frac{n}{2}f_n - \sum_{k=1}^{n-1} kf_k f_{n-k}$ ($n=2,3,\dots$) with $f_n = c_n$ or $\tilde{c}_n$. Since $\psi(0,\zeta)=\int_{(-\pi,\pi)}\frac{\zeta e^{\sqrt{-1}\theta}}{1-\zeta e^{\sqrt{-1}\theta}}\,(2\hat{\mu}_0)(d\theta) = \tilde{\psi}(0,\zeta)$, $\zeta \in \mathbb{D}$, one has $c_n(0) = \tilde{c}_n(0)$ for every $n$. Hence one can recursively show that $\int_{(-\pi,\pi]} e^{\sqrt{-1}n\theta}\,(2\hat{\mu}_{t/2})(d\theta)=c_n(t)=\tilde{c}_n(t)=\tau((U_t U)^n)$. \end{proof} 

\begin{remarks}\label{R-3.4} {\rm 
(1) The above proposition enables us to derive detailed information about $\mu_t$ from many existing results \cite{Biane:FieldsInstituteComm97},\cite[\S\S4.2]{Biane:JFA97},\cite[\S1]{Voiculescu:AdvMath99} on free unitary multiplicative Brownian motions (with the help of $S$-transform machinery, see e.g.~\cite[\S3]{Voiculescu:Saint-Flour98}) when $\tau(P)=\tau(Q)=1/2$. Moreover, the recent work \cite{Zhong:Preprint13} generalizing Biane's analysis \cite[\S\S4.2]{Biane:JFA97} gives more detailed properties of $\hat{\mu}_t$ and hence those of $\mu_t$, though we omit to collect any result in the direction here.   

(2) The above proposition also recaptures, as its  specialization, the main theorem of \cite{DemniHamdiHmidi:Preprint12}. In fact, the free Jacobi process with parameter $(\lambda,\theta) = (1,1/2)$ \cite{Demni:JTheorProbab08} is exactly our $X_t$ (viewed as a random variable in $(Q\mathcal{M}Q,\frac{1}{\tau(Q)}\tau)$) with $P=Q$ and $\tau(P)=\tau(Q)=1/2$. Hence the initial distribution $2\hat{\mu}_0$ is the unit mass at $\theta=0$, and thus the probability distribution of the free Jacobi process with parameter $(\lambda,\theta) = (1,1/2)$ is exactly that of the free unitary multiplicative Brownian motion via $x = \cos^2(\theta/2)$.        
}
\end{remarks} 

\begin{remark}\label{R-3.5} {\rm The following simple `liberation theoretic' proof of the fact that $U_t P U_t^*,Q$ are in generic position for every $t > 0$ has been available so far: By \cite[Corollary 1.7, Proposition 8.7]{Voiculescu:AdvMath99} $d^*_{U_t:\mathbb{C}}1\otimes1$ (see the notation there) exists in $L^2$ for every $t > 0$, which implies, by \cite[Corollary 8.6]{Voiculescu:AdvMath99}, that so does the liberation gradient $j(U_t(\mathbb{C}P+\mathbb{C}(I-P))U_t^*:\mathbb{C}Q+\mathbb{C}(I-Q))$. Therefore, by \cite[Lemma 12.5]{Voiculescu:AdvMath99} (together with $U_t(\mathbb{C}P+\mathbb{C}(I-P))U_t^* = \mathbb{C}U_t P U_t^*+\mathbb{C}(I-U_t P U_t^*)$) we conclude that $U_t P U_t^*, Q$ are in generic position for every $t > 0$. This argument indeed shows the following stronger result: $UPU^*, Q$ are in generic position for any unitary $U$ with finite Fisher information $F(U) <+\infty$ (\cite[Definition 8.9]{Voiculescu:AdvMath99}) which is freely independent of $P,Q$.      
}
\end{remark}      

\section{Free Mutual Information and Orbital Free Entropy}

To a given pair of projections $P,Q$ we can associate four quantities: the liberation gradient $j(\mathbb{C}P+\mathbb{C}(I-P):\mathbb{C}Q+\mathbb{C}(I-Q))$ ($=: j(P:Q)$ for short), the liberation Fisher information $\varphi^*(\mathbb{C}P+\mathbb{C}(I-P):\mathbb{C}Q+\mathbb{C}(I-Q))$ ($=: \varphi^*(P:Q)$), the mutual free information $i^*(\mathbb{C}P+\mathbb{C}(I-P):\mathbb{C}Q+\mathbb{C}(I-Q))$ ($=: i^*(P:Q)$), all of which are due to Voiculescu \cite{Voiculescu:AdvMath99}, and the orbital free entropy $\chi_\mathrm{orb}(P,Q)$ \cite{HiaiMiyamotoUeda:IJM09}. Note that $i^*(\mathbb{C}P+\mathbb{C}(I-P)\,; \mathbb{C}Q+\mathbb{C}(I-Q)) = i^*(\mathbb{C}P+\mathbb{C}(I-P):\mathbb{C}Q+\mathbb{C}(I-Q))$, see \cite[Remarks 10.2 (c)]{Voiculescu:AdvMath99}, and hence it suffices to compute the latter quantity for our purpose. According to the change of variables $\mu_t \leadsto \tilde{\mu}_t \leadsto \hat{\mu}_t$ in \S3 we need to reformulate  Voiculescu's computation of $\varphi^*(P:Q)$, \cite[\S12]{Voiculescu:AdvMath99}, as well as the previous computation of $\chi_\mathrm{orb}(P,Q)$ essentially due to Hiai and Petz \cite{HiaiPetz:ACTA06}. 

For simplicity, write $\delta := \delta_{\,\mathbb{C}P+\mathbb{C}(I-P)\,:\,\mathbb{C}Q+\mathbb{C}(I-Q)}$, the derivation associated with $\mathbb{C}P+\mathbb{C}(I-P)$ and $\mathbb{C}Q+\mathbb{C}(I-Q)$ \cite[\S\S5.3]{Voiculescu:AdvMath99}. Let $\mu$ be the restriction of the probability distribution of $QPQ$ to $(0,1)$. Note that the measure $\mu$ is not changed if $QPQ$ is replaced by $PQP$ and that $\mu$ is exactly $\frac{1}{2}\nu$ in \cite[\S12]{Voiculescu:AdvMath99}. Write $a:=|\tau(P)-\tau(Q)|$ and $b:=|\tau(P)+\tau(Q)-1|$ for simplicity. If $P,Q$ are in generic position, then by \cite[\S\S12.1--12.6]{Voiculescu:AdvMath99} one has, for $n\geq1$,  
\begin{align*} 
(\tau\otimes\tau)\circ\delta\,(PQ)^n &= 
2\,\mathrm{PV}\int\int_{(0,1)^2}x^n(x-1)\frac{1}{x-y}\,\mu\otimes\mu\,(dx,dy) \\
&\quad\quad+ (a+b)\int_{(0,1)} x^{n-1}(x-1)\,\mu(dx) + b\int_{(0,1)} x^{n-1}\,\mu(dx).
\end{align*} 
Here `$\mathrm{PV}$' is the sign of Cauchy principal value. 
With $\theta \in (0,\pi) \mapsto x=\cos^2(\theta/2) \in (0,1)$ and $\tilde{\mu}(d\theta) := \mu(dx)$ as in \S3 we have, for $n\geq1$,   
\begin{equation}\label{Eq-4.1}
\begin{aligned} 
(\tau\otimes\tau)\circ\delta\,(PQ)^n 
&= 
-2\,\mathrm{PV}\int\int_{(0,\pi)^2} \cos^{2n-1}(\alpha/2)\sin(\alpha/2)\frac{\sin\alpha}{\cos\alpha-\cos\beta}\,\tilde{\mu}\otimes\tilde{\mu}\,(d\alpha,d\beta) \\
&\quad-a\int_{(0,\pi)}\cos^{2(n-1)}(\theta/2)\sin^2(\theta/2)\,\tilde{\mu}(d\theta) + b\int_{(0,\pi)}\cos^{2n}(\theta/2)\,\tilde{\mu}(d\theta).
\end{aligned}
\end{equation}
Here we further suppose that $\mu$ has a density function $h$, i.e., $\mu(dx) = h(x)\,dx$. Set $\tilde{h}(\theta) := h(\cos^2(\theta/2))\sin(\theta/2)\cos(\theta/2)$, and thus $\tilde{\mu}(d\theta) = \tilde{h}(\theta)\,d\theta$. Then the symmetrization $\hat{\mu} := \frac{1}{2}(\tilde{\mu}+\tilde{\mu}\circ j^{-1})$ with $j : \theta \in (0,\pi) \mapsto -\theta\in (-\pi,0)$ also has a density function, that is, $\hat{\mu}(d\theta) = \hat{h}(\theta)\,d\theta$ with $\hat{h}(\theta) = (h(\cos^2(\theta/2))|\sin\theta|)/4 = (h(\cos^2(\theta/2))|\sin(\theta/2)|\cos(\theta/2))/2$, $\theta \in (-\pi,\pi)$. The Hilbert transform (or the harmonic conjugate) of $\hat{h}$ is defined by 
\begin{equation*} 
(H\hat{h})(\theta) := \frac{1}{2\pi}\,\mathrm{PV}\int\frac{\hat{h}(\phi)}{\tan((\theta-\phi)/2)}\,d\phi, \quad \theta \in \mathbb{T} = [-\pi,\pi),   
\end{equation*}     
which exists a.e., see \cite[III.C.2]{Koosis:Book}. As in \cite[\S6.7, (6.86)]{King:Book} the restriction of $H\hat{h}$ to $(0,\pi)$ can be re-written in terms of $\tilde{h}$ as follows.  
\begin{equation}\label{Eq-4.2} 
(H\hat{h})(\theta) = -\frac{\sin\theta}{2\pi}\,\mathrm{PV}\int_{(0,\pi)}\frac{\tilde{h}(\phi)}{\cos\theta-\cos\phi}\,d\phi, \quad \theta \in (0,\pi).  
\end{equation} 
Under the equivalent assumptions 
\begin{equation}\label{Eq-4.3}
\begin{aligned} 
&\text{$\hat{h} \in L^2(-\pi,\pi)$ if and only if $\tilde{h} \in L^2(0,\pi)$}, \\
&\quad\quad\quad\text{or equivalently $\int_{(0,1)}\sqrt{x(1-x)}h(x)^2\,dx < +\infty$}
\end{aligned}
\end{equation} 
the Cauchy principal value in \eqref{Eq-4.2} converges in $L^2$-norm by \cite[I.E.4]{Koosis:Book}. Define a function $\xi : (0,\pi) \rightarrow M_2(\mathbb{C})$ by 
\begin{equation*}
\xi(\theta) := \big(4\pi(H\hat{h})(\theta) - a \tan(\theta/2) + b \cot(\theta/2)\big)\begin{bmatrix} 0 & -1 \\ 1 & 0 \end{bmatrix} 
\end{equation*}  
With these preliminaries we have: 

\begin{lemma}\label{L-4.1} Assume that $P,Q$ are in generic position. If $\mu(dx) = h(x)\,dx$ such that $h$ satisfies \eqref{Eq-4.3},   
then $\xi$ gives the liberation gradient $j(\mathbb{C}P+\mathbb{C}(I-P):\mathbb{C}Q+\mathbb{C}(I-Q))$ as long as $\theta \mapsto 4\pi(H\hat{h})(\theta) - a \tan(\theta/2) + b \cot(\theta/2)$ is integrable with respect to $\tilde{\mu}$, and moreover 
\begin{equation}\label{Eq-4.4}
\begin{aligned}  
&\varphi^*(\mathbb{C}P+\mathbb{C}(I-P):\mathbb{C}Q+\mathbb{C}(I-Q)) \\
&= 
\int_{(0,\pi)} 2\big|4\pi(H\hat{h})(\theta) - a\tan(\theta/2) + b\cot(\theta/2)\big|^2\,\tilde{\mu}(d\theta) \\
&= 
\int_{(-\pi,\pi)}\big|2\pi(H(2\hat{h}))(\theta) - a\tan(\theta/2) + b\cot(\theta/2)\big|^2\,(2\hat{\mu})(d\theta)
\end{aligned}
\end{equation}
possibly to be $+\infty$ under the same integrability assumption.  
\end{lemma}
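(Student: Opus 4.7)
The plan is to verify the defining identity of the liberation gradient for the candidate $\xi$, and then to read off the Fisher information as $\|\xi\|_2^2$. The starting point is Voiculescu's moment formula \eqref{Eq-4.1}, and the entire content of the lemma amounts to repackaging it through the change of variable $x = \cos^2(\theta/2)$ and the Hilbert transform \eqref{Eq-4.2}.

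First I would rewrite \eqref{Eq-4.1} in the $\theta$-picture. Using $x(x-1) = -\tfrac{1}{4}\sin^2\theta$, $1-x = \sin^2(\theta/2)$, and $\cos^2(\theta/2) - \cos^2(\beta/2) = -\tfrac{1}{2}(\cos\theta - \cos\beta)$, together with the absorption of the Jacobian $\sin(\theta/2)\cos(\theta/2)$ into $\tilde h$ so that $\tilde\mu(d\theta) = \tilde h(\theta)\,d\theta$, the terms of \eqref{Eq-4.1} proportional to $a$ and $b$ become integrals of $-a\tan(\theta/2)$ and $b\cot(\theta/2)$ against $\tilde\mu$, respectively. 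The inner Cauchy principal value in the remaining double integral is exactly of the form appearing in \eqref{Eq-4.2}, and so collapses into a single integral of $4\pi(H\hat h)(\theta)$ against $\tilde\mu$. Collecting, one gets an expression
\[
(\tau\otimes\tau)\circ\delta\,(PQ)^n = \int_{(0,\pi)} c_n(\theta)\,\eta(\theta)\,\tilde\mu(d\theta),
\]
where $\eta(\theta) := 4\pi(H\hat h)(\theta) - a\tan(\theta/2) + b\cot(\theta/2)$ and $c_n(\theta)$ is an explicit bounded trigonometric factor recording the moments of $(PQ)^n$.

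Second, I would identify $\xi$ with the liberation gradient by matching this pairing with the defining identity $\tau(j(P:Q)\,m) = (\tau\otimes\tau)\circ\delta(m)$ for $m$ in the $*$-algebra generated by $P$ and $Q$. Since the two projections are in generic position, $W^*(P,Q)$ has its canonical $M_2(\mathbb{C})\otimes L^\infty(\mu)$ (two-projection) model, in which $P$ and $Q$ are explicit $M_2(\mathbb{C})$-valued functions of $x$. Interpreting $\xi(\theta)$ as an element of $L^2(W^*(P,Q),\tau)$ via this model, the antisymmetric matrix factor in $\xi$ reflects the antisymmetry of $\delta(P)$ under the tensor flip; a direct trigonometric computation then produces $\tau(\xi\,(PQ)^n)$ equal to the right-hand side displayed above. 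Density of alternating polynomials of $P,Q$ in $W^*(P,Q)$, the uniqueness statement in \cite[Proposition 8.7]{Voiculescu:AdvMath99}, and the stated integrability of $\eta$ against $\tilde\mu$ together yield $\xi = j(\mathbb{C}P+\mathbb{C}(I-P):\mathbb{C}Q+\mathbb{C}(I-Q))$.

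Finally, \eqref{Eq-4.4} is read off as $\|\xi\|_2^2$: the squared Hilbert--Schmidt norm of the antisymmetric matrix factor is $2$, which produces the prefactor $2$ in the first equality. The second equality then follows by symmetrizing the domain of integration from $(0,\pi)$ to $(-\pi,\pi)$ using that $\hat\mu$ is symmetric, combined with the scaling identity $H(2\hat h) = 2H\hat h$ and the passage from $\tilde\mu$ to $2\hat\mu$. The principal obstacle I anticipate is the rigorous passage from the double principal value in \eqref{Eq-4.1} to the single Hilbert transform: the $L^2$-convergence of $H\hat h$ under \eqref{Eq-4.3} is supplied by \cite[I.E.4]{Koosis:Book}, but the exchange-of-integration step that produces \eqref{Eq-4.2} needs a Fubini-type justification relying on the stated integrability of $\eta$ against $\tilde\mu$. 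Setting up the $M_2(\mathbb{C})$-valued model carefully enough to pin down the sign and normalization of $\xi$ on the nose is the other delicate point, but is entirely standard in generic position.
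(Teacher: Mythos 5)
Your proposal follows essentially the same route as the paper: translate Voiculescu's moment formula \eqref{Eq-4.1} into the $\theta$-picture via $x=\cos^2(\theta/2)$ and \eqref{Eq-4.2}, match $\tau(\xi(PQ)^n)$ against $(\tau\otimes\tau)\circ\delta\,(PQ)^n$ in the generic-position $M_2(\mathbb{C})\otimes L^\infty$ model, read off $\varphi^*$ as $\|\xi\|_2^2$ (the factor $2=\mathrm{Tr}(I)$ from the $M_2$ component), and symmetrize to $(-\pi,\pi)$ using oddness of $4\pi H\hat h - a\tan(\theta/2)+b\cot(\theta/2)$. The paper simply outsources the moment-matching step to \cite[Proposition 12.7]{Voiculescu:AdvMath99} and invokes \cite[Definition 5.4]{Voiculescu:AdvMath99} (not Proposition 8.7, which is about dual systems) to conclude $j(P{:}Q)=\xi$; your appeal to Proposition 8.7 for uniqueness is a mis-citation but doesn't affect the argument.
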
     
\begin{proof}
By the computation \eqref{Eq-4.1} together with \eqref{Eq-4.2} and the hypotheses \eqref{Eq-4.3}, one can easily see that $\tau(\xi(PQ)^n) = (\tau\otimes\tau)\circ\delta\,(PQ)^n$ for $n\geq1$ (whose proof is just an translation of the proof of \cite[Proposition 12.7]{Voiculescu:AdvMath99} into the present context), and conclude $j(P:Q) = \xi$ by its definition (see \cite[Definition 5.4]{Voiculescu:AdvMath99}) under the integrability assumption. Then the first equality in \eqref{Eq-4.4} is immediate, and the second one follows from the fact that $\theta \mapsto 4\pi(H\hat{h})(\theta) - a\tan(\theta/2) + b\cot(\theta/2)$ is an odd function. \end{proof}

Keep the notations $\mu$, $\tilde{\mu}$, $\hat{\mu}$, and $a,b$ above. If $P,Q$ are in generic position, then 
\begin{equation*}
\begin{aligned} 
\chi_\mathrm{orb}(P,Q) &= 
\int\int_{(0,1)^2}\log|x-y|\,\mu\otimes\mu\,(dx,dy) \\
&\quad\quad\quad+ 
a \int_{(0,1)}\log x\,\mu(dx) + b \int_{(0,1)}\log(1-x)\,\mu(dx) + C; 
\end{aligned} 
\end{equation*}
otherwise $-\infty$, where $C$ is a unique constant determined by $\chi_\mathrm{orb}(P,Q) = 0$ when $P, Q$ are freely independent with keeping prescribed values of $\tau(P),\tau(Q)$. In particular, $C=(\log2)/2$ when $\tau(P)=\tau(Q)=1/2$. See e.g.~\cite[Lemma 1.1]{HiaiUeda:AIHP09},\cite[Lemma 2.4]{HiaiMiyamotoUeda:IJM09}. In what follows we assume that $P,Q$ are in generic position, and, in particular, $\mu((0,1)) = (1-a-b)/2$ by \cite[(1.3)]{HiaiUeda:AIHP09}. Since $|\cos\alpha-\cos\beta| = (|e^{\sqrt{-1}\alpha}-e^{\sqrt{-1}\beta}|\cdot|e^{\sqrt{-1}\alpha}-e^{-\sqrt{-1}\beta}|)/2$, with $x=\cos^2(\alpha/2)$, $y=\cos^2(\beta/2)$ we have 
\begin{align*} 
&\int\int_{(0,1)^2}\log|x-y|\,\mu\otimes\mu\,(dx,dy) 
= 
\int\int_{(0,\pi)^2}\big(\log|\cos\alpha-\cos\beta| - \log2\big)\,\tilde{\mu}\otimes\tilde{\mu}\,(d\alpha,d\beta) \\
&=
\int\int_{(0,\pi)^2}\big(
\log|e^{\sqrt{-1}\alpha}-e^{\sqrt{-1}\beta}|+\log|e^{\sqrt{-1}\alpha}-e^{-\sqrt{-1}\beta}|-2\log2\big)\,\tilde{\mu}\otimes\tilde{\mu}\,(d\alpha,d\beta) \\
&=
2\int\int_{(-\pi,\pi)^2} 
\log|e^{\sqrt{-1}\alpha}-e^{\sqrt{-1}\beta}|\,\hat{\mu}\otimes\hat{\mu}\,(d\alpha,d\beta) - \frac{\log2}{2}(1-a-b)^2. 
\end{align*}
Here we used the fact that $\mu(0,1) = \tilde{\mu}(0,\pi) = \hat{\mu}(-\pi,\pi) = (1-a-b)/2$. With $x=\cos^2(\theta/2)$ we have 
\begin{align*} 
\int_{(0,1)} \log x\,\mu(dx) &= 
2\int_{(-\pi,\pi)}\log|1+e^{\sqrt{-1}\theta}|\,\hat{\mu}(d\theta) - (1-a-b)\log2, \\
\int_{(0,1)} \log (1-x)\,\mu(dx) &= 
2\int_{(-\pi,\pi)}\log|1-e^{\sqrt{-1}\theta}|\,\hat{\mu}(d\theta) - (1-a-b)\log2. 
\end{align*} 
Therefore, we conclude:

\begin{lemma}\label{L-4.2} If $P,Q$ are in generic position, then 
\begin{align*} 
\chi_\mathrm{orb}(P,Q) 
&= 
2\Big\{
\int\int_{(-\pi,\pi)^2} 
\log|e^{\sqrt{-1}\alpha}-e^{\sqrt{-1}\beta}|\,\hat{\mu}\otimes\hat{\mu}\,(d\alpha,d\beta) \\
&\quad\quad\quad+ 
a \int_{(-\pi,\pi)}\log|1+e^{\sqrt{-1}\theta}|\,\hat{\mu}(d\theta) + 
b \int_{(-\pi,\pi)}\log|1-e^{\sqrt{-1}\theta}|\,\hat{\mu}(d\theta)
\Big\} + Z 
\end{align*}
with a universal constant $Z = Z_{\tau(P),\tau(Q)}$ depending only on $\tau(P), \tau(Q)${\rm;} otherwise $-\infty$. In particular, if $\tau(P)=\tau(Q)=1/2$, then the above formula of $\chi_\mathrm{orb}(P,Q)$ simply becomes 
\begin{equation*}
\chi_\mathrm{orb}(P,Q) = 2\int\int_{(-\pi,\pi)^2}\log|e^{\sqrt{-1}\alpha}-e^{\sqrt{-1}\beta}|\,\hat{\mu}\otimes\hat{\mu}\,(d\alpha,d\beta).
\end{equation*} 
\end{lemma}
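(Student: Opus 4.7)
The plan is simply to substitute the three change-of-variable identities derived in the paragraph immediately preceding the statement of the lemma into the Hiai--Petz formula for $\chi_\mathrm{orb}(P,Q)$ recalled at the start of that same paragraph, and to bookkeep the constant terms that arise from the repeated use of $\log 2$.

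More concretely, the starting formula reads
\begin{equation*}
\chi_\mathrm{orb}(P,Q) = \int\!\!\int_{(0,1)^2}\!\log|x-y|\,\mu\otimes\mu\,(dx,dy) + a\!\int_{(0,1)}\!\log x\,\mu(dx) + b\!\int_{(0,1)}\!\log(1-x)\,\mu(dx) + C
\end{equation*}
when $P,Q$ are in generic position, and is $-\infty$ otherwise, so the ``otherwise'' clause of the lemma is free. Under the substitution $x=\cos^2(\theta/2)$, the three integrals were computed just above the lemma to equal, respectively,
\[
2\!\int\!\!\int_{(-\pi,\pi)^2}\!\log|e^{\sqrt{-1}\alpha}-e^{\sqrt{-1}\beta}|\,\hat{\mu}\otimes\hat{\mu}\,(d\alpha,d\beta) - \tfrac{\log 2}{2}(1-a-b)^2,
\]
\[
2\!\int_{(-\pi,\pi)}\!\log|1+e^{\sqrt{-1}\theta}|\,\hat{\mu}(d\theta) - (1-a-b)\log 2,
\]
\[
2\!\int_{(-\pi,\pi)}\!\log|1-e^{\sqrt{-1}\theta}|\,\hat{\mu}(d\theta) - (1-a-b)\log 2.
\]
Plugging these in and collecting all three constant contributions yields the displayed expression in the lemma with
\begin{equation*}
Z = Z_{\tau(P),\tau(Q)} := C - \tfrac{\log 2}{2}(1-a-b)^2 - (a+b)(1-a-b)\log 2 = C - \tfrac{\log 2}{2}\bigl(1-(a+b)^2\bigr),
\end{equation*}
which manifestly depends only on $\tau(P),\tau(Q)$ through $a$ and $b$ (and through $C$).

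The ``in particular'' assertion then follows by setting $a=b=0$ and $C=(\log 2)/2$ (the value of $C$ recorded in the paragraph before the lemma for the case $\tau(P)=\tau(Q)=1/2$): one gets $Z = \tfrac{\log 2}{2} - \tfrac{\log 2}{2} = 0$, so the last two one-dimensional integrals (which are zero anyway since $a=b=0$) and the constant both drop out, leaving only the double integral.

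There is essentially no obstacle: the computation is routine algebraic bookkeeping starting from identities already at hand, and the only mild care needed is in using $\mu((0,1))=\tilde\mu((0,\pi))=\hat\mu((-\pi,\pi))=(1-a-b)/2$ (from \cite[(1.3)]{HiaiUeda:AIHP09} combined with the normalization of the symmetrization $\hat\mu$) consistently when collecting the $\log 2$ terms.
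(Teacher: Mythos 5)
Your proof is correct and follows exactly the same route as the paper: the paper's proof of Lemma~\ref{L-4.2} consists precisely of the paragraph preceding it (the Hiai--Petz formula plus the three displayed change-of-variable identities), and the lemma is simply the result of substituting those identities and collecting the $\log 2$ terms, as you did. Your explicit simplification $Z = C - \tfrac{\log 2}{2}\bigl(1-(a+b)^2\bigr)$ and the check that $Z=0$ when $a=b=0$, $C=(\log 2)/2$ are both correct.
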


\medskip
Let us return to the original situation; thus we use the notations in \S3. We can now reduce our question to \cite[Corollary 10.9]{Voiculescu:AdvMath99} when $\tau(P)=\tau(Q)=1/2$.    

\begin{theorem} \label{T-4.3} For any two projections $P,Q$ with $\tau(P)=\tau(Q)=1/2$ one has 
$$
i^*(\mathbb{C}P+\mathbb{C}(I-P)\,;\mathbb{C}Q+\mathbb{C}(I-Q)) = -\chi_\mathrm{orb}(P,Q) \quad \text{possibly with $+\infty = +\infty$}.
$$ 
 
\end{theorem}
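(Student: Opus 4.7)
The plan is to reduce the identity to \cite[Corollary 10.9]{Voiculescu:AdvMath99}, which asserts an analogous equality between the free entropy of a unitary and the time integral of the free Fisher information along free unitary multiplicative Brownian motion. All three ingredients prepared in \S3--\S4 are tailored to exactly this reduction: Lemma \ref{L-4.1} rewrites $\varphi^*$ as a quantity that coincides with the free Fisher information $\Phi^*$ of a unitary whose distribution is $2\hat\mu_t$; Proposition \ref{P-3.3} identifies $\{2\hat\mu_{t/2}\}_{t\geq 0}$ with the distribution semigroup of a free unitary multiplicative Brownian motion with initial law $2\hat\mu_0$; and Lemma \ref{L-4.2} rewrites $\chi_\mathrm{orb}(P,Q)$ at $\tau(P)=\tau(Q)=1/2$ as the logarithmic energy $\iint\log|\zeta-\eta|\,d(2\hat\mu_0)(\zeta)\,d(2\hat\mu_0)(\eta)$ appearing in the definition of $\chi_u$.

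First I would unfold the definition of $i^*$ from \cite[\S10]{Voiculescu:AdvMath99} as
$$
i^*(P:Q) = \frac{1}{2}\int_0^\infty \varphi^*(U_t P U_t^* : Q)\,dt.
$$
For every $t>0$, $U_t P U_t^*$ and $Q$ are in generic position by Remark \ref{R-3.5}, and by Biane's regularity results \cite[\S\S4.2]{Biane:JFA97} the distribution $2\hat\mu_t$ of the free unitary Brownian motion has an $L^\infty$ density, so the integrability \eqref{Eq-4.3} holds and Lemma \ref{L-4.1} (with $a=b=0$) applies. Setting $V$ to be a unitary with law $2\hat\mu_0$, freely independent of $\{U_s\}_{s\geq 0}$, Proposition \ref{P-3.3} identifies the integrand as $\Phi^*(U_{2t}V)$. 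A change of variable $s=2t$ yields
$$
i^*(P:Q) = \frac{1}{4}\int_0^\infty \Phi^*(U_s V)\,ds.
$$

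Next I would apply \cite[Corollary 10.9]{Voiculescu:AdvMath99} to the free unitary Brownian motion $s \mapsto U_s V$: that corollary identifies the right-hand side, up to an additive universal constant $\kappa$ depending only on the normalization, with $-\chi_u(V)$, where $\chi_u(V)$ is the logarithmic energy of the law of $V$. On the other hand, Lemma \ref{L-4.2} gives
$$
\chi_\mathrm{orb}(P,Q) = \tfrac{1}{2}\iint_{(-\pi,\pi)^2}\log|e^{\sqrt{-1}\alpha}-e^{\sqrt{-1}\beta}|\,(2\hat\mu_0)\otimes(2\hat\mu_0)(d\alpha,d\beta),
$$
which is the same logarithmic energy up to the same additive constant. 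The two constants are pinned down by calibration at $P,Q$ freely independent (equivalently, $V$ Haar-distributed), where both $i^*(P:Q)=0$ and $\chi_\mathrm{orb}(P,Q)=0$ by construction. Matching the two yields $i^*(P:Q)=-\chi_\mathrm{orb}(P,Q)$. The degenerate cases, where $P,Q$ fail to be in generic position or $2\hat\mu_0$ has infinite logarithmic energy, produce $\chi_\mathrm{orb}(P,Q)=-\infty$ by Lemma \ref{L-4.2} and $i^*(P:Q)=+\infty$ by lower-semicontinuity applied to the explicit formula in Lemma \ref{L-4.1}, giving the identity as $+\infty=+\infty$.

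The main obstacle is a careful bookkeeping of constants: matching the factor $\frac{1}{2}$ in the definition of $i^*$, the factor $2$ from the symmetrization $\hat\mu\mapsto 2\hat\mu$, the time reparametrization $t\leftrightarrow 2t$ from Proposition \ref{P-3.3}, and above all the two universal additive constants appearing in \cite[Corollary 10.9]{Voiculescu:AdvMath99} and in Lemma \ref{L-4.2}. These are all constants of elementary nature, and the calibration at free independence (the ``baseline'' where both quantities vanish) should do away with the absolute values of these constants; the remaining verification is that the relative signs and scales also agree, after which the theorem follows without further work.
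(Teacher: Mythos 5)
Your proposal is correct and follows essentially the same route as the paper: identify $2\hat\mu_{t/2}$ with the law of a free unitary Brownian motion via Proposition \ref{P-3.3}, use Lemma \ref{L-4.1} (with $a=b=0$) to read the integrand of $i^*$ as the free Fisher information of that unitary process, invoke \cite[Corollary 10.9]{Voiculescu:AdvMath99} to convert the time integral into the logarithmic energy of $2\hat\mu_0$, and match it with $\chi_\mathrm{orb}$ via Lemma \ref{L-4.2}. The only cosmetic difference is in the degenerate case: rather than appealing to lower semicontinuity, the paper simply notes that failure of generic position forces an atom of $\hat\mu_0$ at $0$ or $\pi$, so the logarithmic energy in Corollary 10.9 is $+\infty$ and the identity holds as $+\infty=+\infty$.
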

\begin{proof} By Proposition \ref{P-3.3} and \cite[Corollary 1.7]{Voiculescu:AdvMath99} $2\hat{\mu}_{t/2}$ has an $L^\infty$-density $2\hat{h}(t/2,\theta)$ for every $t >0$. By \cite[Corollary 10.9]{Voiculescu:AdvMath99} we have 
\begin{equation}\label{Eq-4.5} 
\begin{aligned}
&-\int\int_{(-\pi,\pi]^2}\log|e^{\sqrt{-1}\alpha}-e^{\sqrt{-1}\beta}|\,(2\hat{\mu}_0)\otimes(2\hat{\mu}_0)\,(d\alpha,d\beta) \\
&\quad\quad\quad\quad\quad\quad=
\frac{1}{2}\int_0^{+\infty}\int_{(-\pi,\pi]}\big(2\pi H(2\hat{h}(t/2,-))(\theta)\big)^2 2\hat{h}(t/2,\theta)\,d\theta\,dt.  
\end{aligned}
\end{equation}    
By Lemma \ref{L-4.1}   
$$
\int_{(-\pi,\pi)}\big(2\pi H(2\hat{h}(t/2,-))(\theta)\big)^2 (2\hat{h}(t/2,-))(\theta)\,d\theta = \varphi^*(U_{t/2} P U_{t/2}^*:Q)
$$ 
holds for every $t>0$ so that the right-hand side of \eqref{Eq-4.6} is identical to 
$$
\frac{1}{2}\int_0^{+\infty} \varphi^*(U_{t/2} P U_{t/2}^*:Q)\,dt = \frac{1}{2}\int_0^{+\infty}\varphi^*(U_t P U_t^*:Q)\,2\,dt = 2\,i^*(P:Q) = 2\,i^*(P\,;Q).
$$ 

Assume first that $P,Q$ are in generic position. By Lemma \ref{L-4.2} the left-hand side of \eqref{Eq-4.5} is identical to $-2\,\chi_\mathrm{orb}(P,Q)$. Thus the desired identity follows. Assume next that $P,Q$ are not in generic position. By what we have done in \S3 $\hat{\mu}_0$ must have at least one atom at either $0$ or $\pi$ with weight $c_1(0)-c_1(+0) \gneqq 0$ or $c_0(0)-c_0(+0) \gneqq 0$, respectively. Thus the left-hand side of \eqref{Eq-4.5} must be $+\infty$, and therefore, so is $i^*(P\,;Q)$. By definition $\chi_\mathrm{orb}(P,Q) = -\infty$ in this case, and hence the desired identity holds as $+\infty = +\infty$.  
\end{proof} 

In closing we illustrate how one can use the subordination relation in Proposition \ref{P-3.1}. 

\begin{lemma}\label{L-4.4} If $H(t,\zeta)$ {\rm(}see \S3{\rm)} defines a function in $\zeta$ of Hardy class with exponent $3/2$ {\rm(}see {\rm\cite[IV.B.2]{Koosis:Book})} at each $t > 0$, then $i^*(\mathbb{C}P+\mathbb{C}(I-P)\,;\mathbb{C}Q+\mathbb{C}(I-Q)) = -\chi_\mathrm{orb}(P,Q)$ holds. 
\end{lemma}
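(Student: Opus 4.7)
\medskip
\noindent\textbf{Proof strategy.} The plan is to differentiate $t \mapsto \chi_{\mathrm{orb}}(U_t P U_t^*, Q)$ along the liberation flow, identify its $t$-derivative with $\tfrac{1}{2}\varphi^*(U_t P U_t^* : Q)$, and integrate over $t \in [0,\infty)$. The endpoint $\chi_{\mathrm{orb}}(U_\infty P U_\infty^*, Q) = 0$ follows from the asymptotic free independence of $U_t P U_t^*$ and $Q$ together with the normalization of $\chi_{\mathrm{orb}}$, which will yield $-\chi_{\mathrm{orb}}(P,Q) = \tfrac{1}{2}\int_0^\infty \varphi^*(U_t P U_t^*:Q)\,dt = i^*(P\,;Q)$, as desired. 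If $P,Q$ are not in generic position, then $\chi_{\mathrm{orb}}(P,Q) = -\infty$ by definition and $i^*(P\,;Q) = +\infty$ via the atom-analysis at $t=0$ already used in the proof of Theorem \ref{T-4.3}; so we may and do assume $P,Q$ in generic position, and then $\hat\mu_t(d\theta) = \hat h(t,\theta)\,d\theta$ for every $t > 0$, so that the formulas of Lemmas \ref{L-4.1} and \ref{L-4.2} apply.

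The derivative is computed by passing to the boundary of $\mathbb{D}$. On $\zeta = e^{\sqrt{-1}\theta}$, the non-tangential boundary value of $L(t,\zeta)$ is $A + \sqrt{-1}\cdot 2\pi (H\hat h)$ (with $A := 2\pi\hat h$ and $H$ the circular Hilbert transform), while that of $a\tfrac{1-\zeta}{1+\zeta} + b\tfrac{1+\zeta}{1-\zeta}$ is $\sqrt{-1}(-a\tan(\theta/2) + b\cot(\theta/2))$; multiplying out and taking imaginary parts yields
\begin{equation*}
\mathrm{Im}\,H(t,e^{\sqrt{-1}\theta}) \;=\; 4\pi\,\hat h(t,\theta)\,\partial_\theta V_t(\theta),
\end{equation*}
where $V_t$ is the log-potential of $2\hat\mu_t + a\delta_\pi + b\delta_0$ attached to Lemma \ref{L-4.2}, and its derivative $\partial_\theta V_t = \tfrac{1}{2}\bigl(4\pi(H\hat h) - a\tan(\theta/2) + b\cot(\theta/2)\bigr)$ is exactly one-half of the integrand bracket in Lemma \ref{L-4.1}. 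Using $\zeta\partial_\zeta = -\sqrt{-1}\,\partial_\theta$ along the boundary for holomorphic functions, real parts of the PDE \eqref{Eq-3.2} give $2\pi\partial_t\hat h = -\partial_\theta\mathrm{Im}\,H$. Differentiating the formula of Lemma \ref{L-4.2} in $t$ and integrating by parts on the circle,
\begin{equation*}
\frac{d}{dt}\chi_{\mathrm{orb}}(U_t P U_t^*, Q) \;=\; 2\int_{\mathbb{T}} V_t\,\partial_t\hat h\,d\theta \;=\; \frac{1}{\pi}\int_{\mathbb{T}}(\partial_\theta V_t)\,\mathrm{Im}\,H\,d\theta \;=\; 4\int_{\mathbb{T}} \hat h\,(\partial_\theta V_t)^2\,d\theta,
\end{equation*}
which by Lemma \ref{L-4.1} equals $\tfrac{1}{2}\varphi^*(U_t P U_t^*:Q)$, as predicted.

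The Hardy $H^{3/2}$-hypothesis is imposed precisely to legitimize every step above. Since $H(t,\cdot)\in H^{3/2}(\mathbb{D})$, the classical theory of $H^p$-spaces (\cite[IV.B.2]{Koosis:Book}) supplies non-tangential $L^{3/2}$-boundary values reproduced by the Poisson integral; combined with the factorization $H = L\cdot\bigl(L + a\tfrac{1-\zeta}{1+\zeta} + b\tfrac{1+\zeta}{1-\zeta}\bigr)$ and local boundedness of the correction factor off $\{\pm1\}$, this forces $L(t,\cdot)\in H^3(\mathbb{D})$; hence $\hat h(t,\cdot)\in L^3(\mathbb{T})$ and $\partial_\theta V_t \in L^3(\mathbb{T})$ by the M.~Riesz theorem. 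H\"{o}lder's inequality with conjugate exponents $(3,3/2)$ then makes each integral above absolutely convergent and validates the integration by parts (with no boundary terms on the circle). The main obstacle, and essentially the only nontrivial point, is to propagate this Hardy-regularity with enough uniformity in $t$ to exchange $t$-integration with the various boundary limits and to pass to $t = \infty$, where long-time analysis of the radial Loewner PDE \eqref{Eq-3.7} via the Szeg\"{o} map forces $\hat\mu_t\to(\mathrm{uniform})$ and hence $\chi_{\mathrm{orb}}(U_t P U_t^*, Q)\to 0$. Once these technical points are settled, the remainder of the argument is an algebraic translation of Voiculescu's Fisher-information computation through the change of variable $x = \cos^2(\theta/2)$.
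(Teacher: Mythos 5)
Your formal computation is correct and your strategy — differentiate $t\mapsto\chi_\mathrm{orb}(U_tPU_t^*,Q)$, identify the derivative with $\tfrac12\varphi^*(U_tPU_t^*:Q)$ via the boundary values of $L$ and $H$, and integrate using the endpoints at $t\to0$ and $t\to\infty$ — is exactly the paper's. Your boundary identities ($\mathrm{Im}\,H = 4\pi\hat h\,\partial_\theta V_t$, the conservation law $2\pi\partial_t\hat h=-\partial_\theta\mathrm{Im}\,H$, and the integration by parts leading to $\tfrac{d}{dt}\chi_\mathrm{orb}=4\int\hat h(\partial_\theta V_t)^2\,d\theta=\tfrac12\varphi^*$) all match what the paper obtains after its regularizations are removed.

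However, the point you flag as ``the main obstacle'' and wave at is precisely where the Hardy hypothesis actually does work, and leaving it unaddressed is a genuine gap. The hypothesis gives a Hardy bound at a \emph{fixed} $t$; it says nothing a priori about $\sup_{t\geq t_1}\sup_{r<1}\Vert H(t,re^{\sqrt{-1}(-)})\Vert_{3/2}$, which is what the Fubini/dominated-convergence steps and the passage $r\nearrow1$, $s\nearrow1$ require. The paper supplies this uniformity through a specific mechanism you do not invoke: the subordination relation of Proposition~\ref{P-3.1} (that $H(t,\zeta)=H(s,f_{s,t}(\zeta))$ for $s<t$, with $f_{s,t}$ a self-map of $\mathbb{D}$ fixing $0$) combined with Littlewood's subordination principle, which forces $\Vert H(t,re^{\sqrt{-1}(-)})\Vert_{3/2}\leq\Vert H(t_1,re^{\sqrt{-1}(-)})\Vert_{3/2}$ for all $t\geq t_1$ and $r<1$. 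Without that, the individual $H^{3/2}$ bounds do not propagate in $t$. The paper also works with a double regularization — Poisson-smoothed measures $P_r*\hat\mu_t$ and a truncated logarithm $\log|1-se^{\sqrt{-1}(\alpha-\beta)}|$ — so that the integration by parts is literal and the boundary-value identities can be recovered by letting $r\nearrow1$, then $s\nearrow1$; your version performs the integration by parts directly on the circle where $V_t$ has log-singularities at $\theta=0,\pi$ and $\hat h$ is only known to be in $L^3$, which needs justification. Two further missing pieces: your derivation of $L(t,\cdot)\in H^3$ from ``local boundedness of the correction factor off $\{\pm1\}$'' is unsound as stated (the correction blows up at $\pm1$); the paper instead uses the elementary pointwise bound $(\mathrm{Re}\,L)^2\leq|H|$ coming from $\mathrm{Re}\,\tilde L\geq\mathrm{Re}\,L\geq0$. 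And for $t\to\infty$ you appeal to weak convergence of $\hat\mu_t$, which does not by itself give $\chi_\mathrm{orb}\to0$; the paper closes this via the log-Sobolev type inequality $-\chi_\mathrm{orb}\leq\varphi^*$ together with the finiteness of $\int_{t_1}^\infty\varphi^*(U_tPU_t^*:Q)\,dt$, and handles $t_1\searrow0$ by a separate continuity result for $\chi_\mathrm{orb}$.
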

\begin{proof} Let $L(t,\zeta)$ be as in \S3, and write $\tilde{L}(t,\zeta) := L(t,\zeta)+a\frac{1-\zeta}{1+\zeta}+b\frac{1+\zeta}{1-\zeta}$. By the PDE \eqref{Eq-3.1} one has $\frac{\partial \mathrm{Re}\tilde{L}}{\partial t}(\zeta) 
= \frac{\partial \mathrm{Re}L}{\partial t}(\zeta) = -\frac{\partial}{\partial \theta}\big(\mathrm{Re}\tilde{L}(\zeta)\cdot\mathrm{Im}L(\zeta) + \mathrm{Im}\tilde{L}(\zeta)\cdot\mathrm{Re}L(\zeta)\big)$ with $\zeta = re^{\sqrt{-1}\theta}$. 
Write $\Sigma_s(p,q) := 2\int\int_{(-\pi,\pi]^2} \log|1-se^{\sqrt{-1}(\alpha-\beta)}|\,p(e^{\sqrt{-1}\alpha})\,q(e^{\sqrt{-1}\beta})\,\frac{d\alpha}{2\pi}\,\frac{d\beta}{2\pi}$ for simplicity. With the Poisson kernel $P_r(\theta)$ the same trick as in the proof of \cite[Proposition 10.8]{Voiculescu:AdvMath99} shows that 
\begin{equation}\label{Eq-4.6} 
\begin{aligned}
&\Sigma_s(P_r*(\hat{\mu}_{t_2}+a\delta_\pi+b\delta_0),P_r*\hat{\mu}_{t_2}) - \Sigma_s(P_r*(\hat{\mu}_{t_1}+a\delta_\pi+b\delta_0),P_r*\hat{\mu}_{t_1}) \\
&= 
\frac{1}{2}\int_{t_1}^{t_2}\int_{(-\pi,\pi]}\mathrm{Im}\Big(2L(t,sre^{\sqrt{-1}\theta})+a\frac{1-sre^{\sqrt{-1}\theta}}{1+sre^{\sqrt{-1}\theta}}+b\frac{1+sre^{\sqrt{-1}\theta}}{1-sre^{\sqrt{-1}\theta}}\Big) \\
&\quad\times\mathrm{Im}\Big(2L(t,re^{\sqrt{-1}\theta})+a\frac{1-re^{\sqrt{-1}\theta}}{1+re^{\sqrt{-1}\theta}}+b\frac{1+re^{\sqrt{-1}\theta}}{1-re^{\sqrt{-1}\theta}}\Big)\,2\mathrm{Re}L(t,re^{\sqrt{-1}\theta})\,\frac{d\theta}{2\pi}\Big)\,dt \\ 
&\quad+\int_{t_1}^{t_2} \Big[\int_{(-\pi,\pi]} \mathrm{Im}(L+\tilde{L})(t,sre^{\sqrt{-1}\theta})\mathrm{Im}L(t,re^{\sqrt{-1}\theta})\\
&\phantom{aaaaaaaaaaaaaaaaaaaaaaaaa}\times
\mathrm{Re}\Big(a\frac{1-re^{\sqrt{-1}\theta}}{1+re^{\sqrt{-1}\theta}}+b\frac{1+re^{\sqrt{-1}\theta}}{1-re^{\sqrt{-1}\theta}}\Big)\,\frac{d\theta}{2\pi}\Big]\,dt
\end{aligned}
\end{equation} 
for every $0<t_1 < t_2 <\infty$. 

Since $\mathrm{Re}L(t,\zeta)^2 \leq |H(t,\zeta)|$, the assumption here implies that $\hat{\mu}_t$ has an $L^3$-density $\hat{h}(t,\theta)$, i.e., $\hat{\mu}_t(d\theta) = \hat{h}(t,\theta)\,d\theta$, for every $t>0$ (see \cite[p.15]{Koosis:Book}). We fix arbitrary $0 < t_1 < t_2 < +\infty$ for a while. Set $M_{t_1} := \sup_{r < 1}\Vert H(t,re^{\sqrt{-1}(-)})\Vert_{3/2} < +\infty$ by assumption, where $\Vert-\Vert_p$ denotes the usual $L^p$-norm with respect to $d\theta$ rather than $d\theta/2\pi$ following \cite{Koosis:Book}. By the subordination relation in Proposition \ref{P-3.1} with Littlewood's subordination principle (see \cite[Theorem 1.7]{Duren:Book}) one has 
\begin{equation}\label{Eq-4.7} 
\Vert\mathrm{Re}L(t,re^{\sqrt{-1}(-)})\Vert_3 \leq \Vert H(t,re^{\sqrt{-1}(-)})\Vert_{3/2}^{1/2} \leq M_{t_1}^{1/2};\,\text{hence}\hspace{0.1cm} \Vert\hat{h}(t,-)\Vert_3 \leq M_{t_1}^{1/2}/2\pi
\end{equation} 
for every $t \geq t_1$ and $0\leq r < 1$. Note that 
$$
\Big\Vert\mathrm{Im}L(t,re^{\sqrt{-1}(-)})\mathrm{Re}\Big(a\frac{1-re^{\sqrt{-1}(-)}}{1+re^{\sqrt{-1}(-)}}+b\frac{1+re^{\sqrt{-1}(-)}}{1-re^{\sqrt{-1}(-)}}\Big)\Big\Vert_{3/2} \leq \Vert H(t,re^{\sqrt{-1}(-)})\Vert_{3/2} \leq M_{t_1}
$$ 
for every $t \geq t_1$ and $0 \leq r < 1$ by the subordination relation in Proposition \ref{P-3.1} with Littlewood's subordination principle again. Using the Cauchy--Schwarz inequality (with respect to $\mathrm{Re}(\cdots)\,d\theta/2\pi\,dt$) and then the H\"{o}lder inequality (with respect to $d\theta$ and exponents $3,3/2$) with the help of M.~Riesz's theorem (see \cite[p.91]{Koosis:Book}) we see that the absolute value of the second term of the right-hand side of \eqref{Eq-4.6} is not greater than 
\begin{align*}
\Big\{\int_{t_1}^{t_2} \Big[\int_{(-\pi,\pi]} |\mathrm{Im}(L+\tilde{L})(t,sre^{\sqrt{-1}\theta})|^2\mathrm{Re}\Big(a\frac{1-re^{\sqrt{-1}\theta}}{1+re^{\sqrt{-1}\theta}}+b\frac{1+re^{\sqrt{-1}\theta}}{1-re^{\sqrt{-1}\theta}}\Big)\,\frac{d\theta}{2\pi}\Big]\,dt\Big\}^{1/2}& \\
\times M_{t_1}^{3/4}\,\sqrt{C_3(t_2-t_1)/2\pi}&   
\end{align*}
with a universal constant $C_3>0$ (that comes from M.~Riesz's theorem) and moreover that this converges to $0$ as $r\nearrow1$ thanks to \cite[p.7--8]{Koosis:Book}, \eqref{Eq-4.7}, the continuity of $\mathrm{Im}(L+\tilde{L})(t,\zeta)$ in $(t,\zeta)$ and $\mathrm{Im}(L+\tilde{L})(t,\pm s) = 0$ (due to $\hat{h}(-\theta) = \hat{h}(\theta)$). By the subordination relation in Proposition \ref{P-3.1} with Littlewood's subordination principle again 
\begin{equation}\label{Eq-4.8}
\begin{aligned}
\Big\Vert\mathrm{Im}\Big(2L(t,re^{\sqrt{-1}(-)}) + a\frac{1-re^{\sqrt{-1}(-)}}{1+re^{\sqrt{-1}(-)}} + b\frac{1+re^{\sqrt{-1}(-)}}{1-re^{\sqrt{-1}(-)}}\Big)\,2\mathrm{Re}L(t,re^{\sqrt{-1}(-)})\Big\Vert_{3/2}& \\
\leq 4\Vert H(t,re^{\sqrt{-1}(-)})\Vert_{3/2} \leq 4M_{t_1}&
\end{aligned}
\end{equation} 
for every $t \geq t_1$ and $0 \leq r < 1$, and we can easily confirm, with the help of facts in \cite[p.9; p.88--89]{Koosis:Book}, that the first term of the right-hand side of \eqref{Eq-4.6} converges to 
\begin{align*} 
&\frac{1}{2}\int_{t_1}^{t_2}\int_{(-\pi,\pi]}\mathrm{Im}\left(2L(t,se^{\sqrt{-1}\theta})+a\frac{1-se^{\sqrt{-1}\theta}}{1+se^{\sqrt{-1}\theta}}+b\frac{1+se^{\sqrt{-1}\theta}}{1-se^{\sqrt{-1}\theta}}\right) \\
&\quad\quad\quad\quad\quad\quad\times\left(2\pi H(2\hat{h}(t,-))(\theta) - a \tan(\theta/2) + b\cot(\theta/2)\right)\,2\hat{h}(t,\theta)\,d\theta\,dt  
\end{align*} 
as $r\nearrow1$. Consequently, letting $Z(s) := Z_{\tau(P),\tau(Q)} -\frac{1-(a+b)^2}{4}\log s$ we have 
\begin{equation}\label{Eq-4.9} 
\begin{aligned}
&-\Big\{2\int\int_{(-\pi,\pi]^2}\log|1-se^{\sqrt{-1}(\alpha-\beta)}|\,(\hat{\mu}_{t_1} + a\delta_{\pi}+\delta_0)(d\alpha)\,\hat{\mu}_{t_1}(d\beta) + Z(s) \Big\}\\
&=\frac{1}{2}\int_{t_1}^{t_2}\int_{(-\pi,\pi]}\mathrm{Im}\left(2L(t,se^{\sqrt{-1}\theta})+a\frac{1-se^{\sqrt{-1}\theta}}{1+se^{\sqrt{-1}\theta}}+b\frac{1+se^{\sqrt{-1}\theta}}{1-se^{\sqrt{-1}\theta}}\right) \\
&\quad\quad\quad\quad\quad\quad\times\left(2\pi H(2\hat{h}(t,-))(\theta) - a \tan(\theta/2) + b\cot(\theta/2)\right)\,2\hat{h}(t,\theta)\,d\theta\,dt \\
&\quad-\Big\{2\int\int_{(-\pi,\pi]^2} \log|1-se^{\sqrt{-1}(\alpha-\beta)}|\,(\hat{\mu}_{t_2} + a\delta_{\pi}+\delta_0)(d\alpha)\,\hat{\mu}_{t_2}(d\beta) + Z(s)\Big\}.
\end{aligned}
\end{equation} 
Write $k(t,\theta) := 2\pi H(2\hat{h}(t,-))(\theta) - a \tan(\theta/2) + b\cot(\theta/2)$ for simplicity. By \eqref{Eq-4.8} with \cite[p.9; p.88--89]{Koosis:Book} one has 
\begin{equation}\label{Eq-4.10} 
\Vert k(t,-)\,2\hat{h}(t,-)\Vert_{3/2} \leq 2M_{t_1}/\pi
\end{equation} 
for every $t \geq t_1$. By Lemma \ref{L-4.1} with the aid of \eqref{Eq-4.7} and \eqref{Eq-4.10}, and moreover by \cite[Proposition 10.11 (a)]{Voiculescu:AdvMath99}
$$
\int_{t_1}^{t_2}\int_{(-\pi,\pi]} k(t,\theta)^2\, 2\hat{h}(t,\theta)\,d\theta\,dt = \int_{t_1}^{t_2} \varphi^*(U_t P U_t^*:Q)\,dt \leq 2i^*(U_{t_1}PU_{t_1}^*:Q) < +\infty.
$$ 
By the H\"{o}lder inequality, \eqref{Eq-4.7}, \eqref{Eq-4.10} and M.~Riesz's theorem   
$$
\int_{t_1}^{t_2}\int_{(-\pi,\pi]} |2\pi H(2\hat{h}(t,-))(\theta)|\,|k(t,\theta)|\,2\hat{h}(t,\theta)\,d\theta\,dt  < +\infty.
$$
Since 
\begin{align*} 
a\Big|\mathrm{Im}\Big(\frac{1- se^{\sqrt{-1}\theta}}{1+ se^{\sqrt{-1}\theta}}\Big)\Big| &+ b\Big|\mathrm{Im}\Big(\frac{1+ se^{\sqrt{-1}\theta}}{1-se^{\sqrt{-1}\theta}}\Big)\Big| 
\leq 
a|\tan(\theta/2)| + b|\cot(\theta/2)| \\
&\leq 
2a\tan(\pi/4) + 2b\cot(\pi/4) + k(t,\theta) + |2\pi H(2\hat{h}(t,-))(\theta)|,
\end{align*} 
we easily see, by \cite[p.9; p.88--89]{Koosis:Book} again,  that the first term of the right-hand side of \eqref{Eq-4.9} converges to $\frac{1}{2}\int_{t_1}^{t_2} \varphi^*(U_t P U_t^*:Q)\,dt$ as $s\nearrow1$. By Lemma \ref{L-4.2} with the aid of the first 5 lines of \cite[p.147]{Voiculescu:AdvMath99} we finally get 
$$
-\chi_\mathrm{orb}(U_{t_2}PU_{t_2}^*) + \frac{1}{2}\int_{t_1}^{t_2} \varphi^*(U_t P U_t^*:Q)\,dt = -\chi_\mathrm{orb}(U_{t_1}PU_{t_1}^*,Q).
$$

By \cite[Theorem 2.1]{HiaiUeda:AIHP09}, \cite[Proposition 10.11 (a)]{Voiculescu:AdvMath99} and \cite[Proposition 4.6]{HiaiMiyamotoUeda:IJM09} one has 
$$
\int_{t_1}^{+\infty} -\chi_\mathrm{orb}(U_{t_2} P U_{t_2}^*,Q)\,dt_2 \leq \int_{t_1}^{+\infty} \varphi^*(U_{t_2}PU_{t_2}^*:Q)\,dt_2 = 2\,i^*(U_{t_1}PU_{t_1}^*:Q) < +\infty,
$$ 
implying $\lim_{t_2\nearrow+\infty}\chi_\mathrm{orb}(U_{t_2}PU_{t_2}^*,Q) = 0$. (This trick originates in a preprint version of \cite{HiaiUeda:AIHP09}.) By \cite[Proposition 2.5 (4), Proposition 4.6]{HiaiMiyamotoUeda:IJM09} one has $\lim_{t_1\searrow0}\chi_\mathrm{orb}(U_{t_1}PU_{t_1}^*,Q) = \chi_\mathrm{orb}(P,Q)$. Hence we are done.    
\end{proof} 

By \eqref{Eq-3.2} $H(t,\zeta)$ becomes the constant $(1-(a+b)^2)/4$ in the time stationary case; hence the assumption of Lemma \ref{L-4.4} is not strange. Here is a sample of application of Lemma \ref{L-4.4}.   

\begin{corollary}\label{C-4.5} Assume that the measure $\mu_0$ {\rm(}see \S3{\rm)} has an $L^3$-density with respect to $x(1-x)\,dx$ on $[0,1]$ and is supported in $[\alpha,\beta]$ such that $\alpha \gneqq 0$ if $\tau(P)\neq\tau(Q)$ and $\beta \lneqq 1$ if $\tau(P)+\tau(Q)\neq1$. Then $i^*(\mathbb{C}P+\mathbb{C}(I-P)\,;\mathbb{C}Q+\mathbb{C}(I-Q))=-\chi_\mathrm{orb}(P,Q)$ holds. 
\end{corollary}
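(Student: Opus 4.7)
The strategy is to apply Lemma \ref{L-4.4}: it suffices to show that $H(t,\zeta)$ belongs to the Hardy class $H^{3/2}$ of the unit disk for every $t>0$. Since the subordination $H(t,\zeta)=H(0,f_t(\zeta))$ from Proposition \ref{P-3.1} holds with $f_t:\mathbb{D}\to\mathbb{D}$ a holomorphic self-map fixing $0$, Littlewood's subordination principle (\cite[Theorem 1.7]{Duren:Book}) gives $\Vert H(t,\cdot)\Vert_{H^{3/2}}\le\Vert H(0,\cdot)\Vert_{H^{3/2}}$, so the task reduces to proving $H(0,\cdot)\in H^{3/2}(\mathbb{D})$.

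A routine change of variables $x=\cos^2(\theta/2)$ converts the hypothesis $\mu_0(dx)=k(x)\,x(1-x)\,dx$ with $k\in L^3(x(1-x)\,dx)$ into $\hat{h}(\theta)=\tfrac{1}{16}\,k(\cos^2(\theta/2))\,|\sin\theta|^3$; using $|\sin\theta|^9\le|\sin\theta|^3$ and changing back, one obtains $\int|\hat{h}|^3\,d\theta\le\mathrm{const}\cdot\int_0^1 k(x)^3\,x(1-x)\,dx<+\infty$, so $\hat{h}\in L^3(-\pi,\pi)$. The real part of $L(0,re^{\sqrt{-1}\theta})$ is a scalar multiple of the Poisson integral of $\hat{h}$, and M.~Riesz's theorem (see \cite[III.C.2]{Koosis:Book}) upgrades this to $L(0,\cdot)\in H^3(\mathbb{D})$. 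In particular, the diagonal term $L(0,\zeta)^2$ of $H(0,\zeta)$ already lies in $H^{3/2}(\mathbb{D})$.

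It remains to control $a\tfrac{1-\zeta}{1+\zeta}L(0,\zeta)$ and $b\tfrac{1+\zeta}{1-\zeta}L(0,\zeta)$. Consider the first of these when $a>0$. The hypothesis $\alpha>0$ forces $\hat{\mu}_0$ to be supported in an arc of $(-\pi,\pi)$ bounded away from $\pm\pi$, so the Cauchy-type integral $L(0,\zeta)=\int\tfrac{e^{\sqrt{-1}\phi}+\zeta}{e^{\sqrt{-1}\phi}-\zeta}\,\hat{\mu}_0(d\phi)$ extends $L(0,\cdot)$ holomorphically across a neighborhood of $\zeta=-1$. Because $\hat{h}$ is even while $\tfrac{e^{\sqrt{-1}\phi}-1}{e^{\sqrt{-1}\phi}+1}=\sqrt{-1}\tan(\phi/2)$ is odd in $\phi$, the symmetry of $\hat{\mu}_0$ yields $L(0,-1)=0$. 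Hence $g(\zeta):=L(0,\zeta)/(1+\zeta)$ is holomorphic on $\mathbb{D}$ and uniformly bounded on a fixed neighborhood $U$ of $-1$; off $U$ the estimate $|g(\zeta)|\le|L(0,\zeta)|/\epsilon$ (with $\epsilon>0$ a lower bound for $|1+\zeta|$ there) together with $L(0,\cdot)\in H^3$ splits $\int|g(re^{\sqrt{-1}\theta})|^3\,d\theta$ into two uniformly bounded pieces, giving $g\in H^3(\mathbb{D})$ and therefore $a\tfrac{1-\zeta}{1+\zeta}L(0,\zeta)=a(1-\zeta)g(\zeta)\in H^3$. The companion argument when $b>0$ uses $\beta<1$, analyticity of $L(0,\cdot)$ near $\zeta=1$, and the odd function $\cot(\phi/2)$ to give $L(0,1)=0$ and $b\tfrac{1+\zeta}{1-\zeta}L(0,\zeta)\in H^3$. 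Summing the three contributions puts $H(0,\cdot)$ in $H^{3/2}(\mathbb{D})$, and Lemma \ref{L-4.4} concludes.

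The main delicate point is the cancellation $L(0,\pm1)=0$: without the support hypothesis the relevant integrals against $\tan(\phi/2)$ or $\cot(\phi/2)$ need not even converge, and even if the boundary value vanished one would need analytic (not merely $L^3$) behavior near $\pm1$ to absorb the poles of $\tfrac{1\mp\zeta}{1\pm\zeta}$ into the Hardy class. The support hypotheses on $\alpha$ and $\beta$ are designed precisely to supply this local analyticity.
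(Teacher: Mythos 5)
Your proposal is correct and follows essentially the same route as the paper's proof: reduce to Lemma \ref{L-4.4} via the subordination $H(t,\zeta)=H(0,f_t(\zeta))$ and Littlewood's principle, get $L(0,\cdot)\in H^3$ from the $L^3$-density via M.~Riesz, and use the support hypothesis to obtain analyticity of $L(0,\cdot)$ across $\pm1$ together with $L(0,\pm1)=0$ so that the poles of $\frac{1\mp\zeta}{1\pm\zeta}$ are cancelled and $H(0,\cdot)\in H^{3/2}$. Your explicit three-term decomposition merely spells out the step the paper dismisses as ``plain to show.''
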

\begin{proof} For simplicity, assume both $a=|\tau(P)-\tau(Q)|\neq0$ and $b=|\tau(P)+\tau(Q)-1|\neq0$. It is easy to see that $\hat{\mu}_0$ has an $L^3$-density $\hat{h}(0,\theta)$ (with respect to $d\theta$); hence $L(0,\zeta)$ is a function  in $\zeta$ of Hardy class with exponent $3$ by M.~Riesz's theorem with a standard fact (see \cite[p.9; p.88--89]{Koosis:Book}). Moreover, the assumption here implies that $L(0,\zeta)$ has analytic continuation across both $\zeta=\pm1$. Since $\lim_{\zeta\to\pm1} L(0,\zeta) = 0$, $L(0,\zeta)$ admits a power series expansion without constant term around $\zeta=\pm1$. Thus $H(0,\zeta)$ is bounded in some neighborhoods at both $\zeta=\pm1$. It is plain to show that $H(0,\zeta)$ is a function in $\zeta$ of Hardy class with exponent $3/2$. Hence the assertion follows thanks to the subordination relation in Proposition \ref{P-3.1} with Littlewood's subordination principle (see \cite[Theorem 1.7]{Duren:Book}).
\end{proof} 

The above fact suggests that the question should be affirmative without assuming $\tau(P)=\tau(Q)=1/2$. Only missing piece in our attempt is apparently a more detailed study of $H(t,\zeta)$ and/or the conformal transformations $f_t(\zeta)$; thus the question comes down to a study of Loewner--Kufarev equations.  
    
\section*{Acknowledgement} We thank Fumio Hiai for discussions on this subject matter and comments to a draft of the present notes.  
}

\end{document}